\definecolor{refkey}{rgb}{0,0,1}
\definecolor{labelkey}{rgb}{0,0,1}
\numberwithin{equation}{section}
\newtheorem{theorem}{Theorem}[section]
\newtheorem{proposition}[theorem]{Proposition}
\newtheorem{lemma}[theorem]{Lemma}
\newtheorem{corollary}[theorem]{Corollary}
\newtheorem{Definition}[theorem]{Definition}
\newenvironment{definition}{\begin{Definition}\rm}{\end{Definition}}
\newtheorem{Remark}[theorem]{Remark}
\newenvironment{remark}{\begin{Remark}\rm}{\end{Remark}}
\newtheorem{RHproblem}[theorem]{RH problem}
\newtheorem{Example}[theorem]{Example}
\newcommand{\C}{\mathbb{C}}
\newcommand{\D}{\mathbb D}
\newcommand{\N}{\mathbb{N}}
\newcommand{\R}{\mathbb{R}}
\newcommand{\T}{\mathbb{T}}
\newcommand{\MM}{\mathcal M}
\newcommand{\OO}{\mathcal O}
\newcommand{\PP}{\mathcal P}
\newcommand{\eps}{\epsilon}
\newcommand{\vphi}{\varphi}
\renewcommand{\Re}{{\rm Re} \,}
\newcommand{\inter}{{\rm Int} }
\def\supp{\mathop{\mathrm{supp}}\nolimits}
\newcommand{\p}{\partial}
\newcommand{\Om}{\Omega}
\renewcommand{\bar}{\overline}
\renewcommand{\tilde}{\widetilde}
\renewcommand{\hat}{\widehat}
\begin{document}
\title{
An extremal problem for the Bergman kernel\\
of orthogonal polynomials}

\author{S. Charpentier, N. Levenberg and F. Wielonsky}
\date{\today}
\maketitle
\begin{abstract} Let $\Gamma \subset \C$ be a curve of class $C(2,\alpha)$. For $z_{0}$ in the unbounded component of $\C\setminus \Gamma$, and for $n=1,2,...$, let $\nu_n$ be a probability measure with $\supp(\nu_{n})\subset \Gamma$ which minimizes the Bergman function $B_{n}(\nu,z):=\sum_{k=0}^{n}|q_{k}^{\nu}(z)|^{2}$ at $z_{0}$ among all probability measures $\nu$ on $\Gamma$ (here, $\{q_{0}^{\nu},\ldots,q_{n}^{\nu}\}$ are an orthonormal basis in $L^2(\nu)$ for the holomorphic polynomials of degree at most $n$). We show that $\{\nu_{n}\}_n$ tends weak-* to $\hat\delta_{z_{0}}$, the balayage of the point mass at $z_0$ onto $\Gamma$, by relating this to an optimization problem for probability measures on the unit circle. Our proof makes use of estimates for Faber polynomials associated to $\Gamma$.

\end{abstract}

\bigskip

\noindent Keywords: orthogonal polynomials, Bergman kernel, Faber polynomials, Szeg\H{o}  function

\medskip

\noindent MSC Classification Numbers: 30C40, 30E10

\bigskip

\section{Introduction}
Let $K$ be a compact subset of the complex plane $\C$ and let $\MM(K)$ denote the probability measures on $K$. Given a positive integer $n$, if the support of $\nu \in \MM(K)$ contains at least $n+1$ points, we can form the associated Bergman function 
$$B_{n}(\nu,z):=\sum_{k=0}^{n}|q_{k}^{\nu}(z)|^{2},$$ 
where  $\{q_{0}^{\nu},\ldots,q_{n}^{\nu}\}$ form an orthonormal basis in $L^2(\nu)$ for $\PP_n$, the holomorphic polynomials of degree at most $n$. One can set $B_{n}(\nu,z)=+\infty$ when $\nu$ has less than $n+1$ points in its support, but since we are interested in asymptotics ($n\to \infty$) of Bergman functions, we may assume $K$ contains infinitely many points. We fix $z_{0}\in\Omega$, the unbounded component of $\C\setminus K$, and for each $n$ we consider a probability measure $\nu_{n}$ with $\supp(\nu_{n})\subset K$ which minimizes the Bergman function at $z_{0}$ among all such $\nu \in \MM(K)$:
$$B_{n}(\nu_{n},z_{0})=\min_{\nu\in\MM(K)}B_{n}(\nu,z_{0}).$$
The existence of at least one minimizing measure follows from the weak-$*$ compactness of $\MM(K)$ and lower semicontinuity of the map $\nu\mapsto B_{n}(\nu,z_{0})$, see Lemma \ref{low-sc}.

Equivalently, $\nu_{n}$ solves the max-min problem
\begin{equation}\label{max-min}
\max_{\nu\in\MM(K)}\lambda_{n}(\nu,z_{0}),
%\min_{p\in\PP_{n},~p(z_{0})=1}\int_{K}|p(t)|^{2}d\nu,
\qquad
\lambda_{n}(\nu,z_{0})=\min_{p\in\PP_{n},~p(z_{0})=1}\int_{K}|p|^{2}d\nu\leq1,
\end{equation}
where $\lambda_{n}(\nu,z_{0})$ is the Christoffel function of $\nu$ at $z_{0}$.
We recall that (cf., \cite[Theorem 1.4]{VA})
\begin{equation}\label{B-lambda}
\lambda_{n}(\nu,z)=B_{n}(\nu,z)^{-1},\qquad n\geq0,
\end{equation}
where we note that, with our previous convention on $B_{n}(\nu,z)$, the equality still holds when $\nu$ has less than $n+1$ points.

Such an extremal measure $\nu_{n}$ is called an {\bf optimal prediction measure} (OPM) for $K$ and $z_{0}$ of order $n$. In general, it is not unique. For motivation to study this problem, we refer to \cite{BLOC} where they give a nice application to the field of optimal design for polynomial regression. Although 
%the first expression in the definition of 
$B_{n}(\nu,z)$ is well defined only if
all orthogonal polynomials up to degree $n$, exist, 
$\lambda_{n}(\nu,z)$ is always defined, equal to $0$ when the support of $\nu$ consists of fewer than $n+1$ points. In fact, $\lambda_{n}(\nu,z)$ is defined for all $z\in \C$. For 
an extremal measure $\nu_{n}$, all
the orthogonal polynomials $q_{k}^{\nu_{n}}$, $k=0,\ldots,n$, do exist.
Note also that, for each $n$, the Bergman function $B_{n}(\nu,z)$ only depends on a finite number of  moments of the measure $\nu$, namely
\begin{equation}\label{moments}
m_{j,k}=\int_{K}z^{j}\bar z^{k}d\nu,\quad j,k=0,\ldots,n.
\end{equation}
It is known that $B_{n}(\nu_{n},z_{0})$ is related to the polynomial of extremal growth at $z_0$, see \cite{BLOC}. Indeed, one has
\begin{equation}\label{max-growth}
B_{n}(\nu_{n},z_{0})=\sup_{p\in\PP_{n}}\frac{|p(z_{0})|^{2}}{\|p\|_{K}^{2}}\leq e^{2ng_{\Omega}(z_{0})},
\end{equation}
where the upper bound, with $g_{\Omega}$ the Green function of $\Omega$ and the point $\infty$, follows from the fact that
\begin{equation}\label{bw} g_{\Omega}(z_0)=\sup \{\frac{1}{deg(p)}\log |p(z_0)|: \ p\in \cup_n \PP_n, \ ||p||_K\leq 1\}. \end{equation} 
Here deg$(p)$ denotes the degree of $p$ and $||p||_K:=\sup_{z\in K}|p(z)|$. Note that polynomials of extremal growth are also studied in the recent paper \cite{CSZ} where they are called dual residual polynomials.

For a general probability measure $\nu$ on $K$ and $z\in\C$, we have that $$1\geq \lambda_{n}(\nu,z) \geq \lambda_{n+1}(\nu,z)\geq 0$$ 
so that the limit
\begin{equation}
\label{def-lamb-inf}
\lambda_{\infty}(\nu,z):=\lim_{n\to\infty}\lambda_{n}(\nu,z)
\end{equation}
exists and $0\leq \lambda_{\infty}(\nu,z)\leq 1$.
It has been verified by explicit computations in \cite{BLOC} that if: \\[5pt]
i) $K=[-1,1]$ and $z_{0}$ is real or purely imaginary, \\
ii) $K=\bar \D:=\{z\in \C: |z|\leq 1\}$ and $|z_{0}|>1$, \\[5pt]
certain sequences of optimal prediction measures $\nu_{n}$ tend weak-* to a limit, namely
\begin{equation}\label{conv}
\nu_{n}\to\hat\delta_{z_{0}},\qquad n\to\infty,
\end{equation}
where $\hat\delta_{z_{0}}$ denotes the balayage measure of $\delta_{z_{0}}$, the point mass at $z_0$, onto $K$. The authors of \cite{BLOC} have conjectured that this convergence holds true more generally. It is the aim of the present paper to show that the conjecture holds true for a more general class of compact sets $K$ and points $z_{0}$ outside of~$K$. Namely, our main result is the following theorem.
\begin{theorem} \label{keyth}
Assume $K$ is a compact subset bounded by a curve $\Gamma\in C(2,\alpha)$, $0<\alpha<1$ (i.e.,\ $\Gamma$ can be parameterized by a function of class $C(2,\alpha)$). For $z_0\in \Omega$, any sequence of optimal prediction measures $\{\nu_{n}\}_n$ tends weak-* to $\hat\delta_{z_{0}}$, the balayage of $\delta_{z_{0}}$ onto $\Gamma$.
\end{theorem}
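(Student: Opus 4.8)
Write $\Phi$ for the conformal map of $\Omega$ onto $\{|w|>1\}$ with $\Phi(\infty)=\infty$, so that $g_\Omega(z_0)=\log|\Phi(z_0)|$ and $w_0:=\Phi(z_0)$ satisfies $|w_0|>1$; since $\Gamma\in C(2,\alpha)$, $\Phi$ extends (Kellogg--Warschawski) to a homeomorphism of $\bar\Omega\cup\{\infty\}$ onto $\{|w|\ge1\}\cup\{\infty\}$. I will work with the Faber polynomials $F_k$ of $K$, which on $\bar\Omega$ satisfy $F_k(z)=\Phi(z)^k+\sum_{j\ge1}b_{k,j}\Phi(z)^{-j}$; the role of the $C(2,\alpha)$ hypothesis is precisely to ensure $\varepsilon_k:=\sum_{j\ge1}|b_{k,j}|\to0$ (for $C(1,\alpha)$ curves one has only $\sup_k\|F_k\|_\Gamma<\infty$). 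Consequently $\|F_k-\Phi^k\|_\Gamma\le\varepsilon_k\to0$, $F_k(z_0)=\Phi(z_0)^k(1+o(1))$, while $|\Phi|\equiv1$ on $\Gamma$. Finally, the minimax identity behind \eqref{max-growth} forces any OPM $\nu_n$ to be carried by the maximum-modulus set on $K$ of the dual extremal polynomial, hence (maximum principle) to be supported on $\Gamma$; so we regard $\nu_n\in\MM(\Gamma)$.

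The argument hinges on a single inequality. By \eqref{max-growth} and \eqref{bw}, $B_n(\nu_n,z_0)=\sup_{p\in\PP_n}|p(z_0)|^2/\|p\|_K^2\le|\Phi(z_0)|^{2n}$; since $\lambda_n(\nu_n,z_0)=B_n(\nu_n,z_0)^{-1}$ is the Christoffel function, this is equivalent, after homogenizing, to
\begin{equation}\label{key-plan}
\int_\Gamma|p|^2\,d\nu_n\ \ge\ |\Phi(z_0)|^{-2n}\,|p(z_0)|^2,\qquad p\in\PP_n.
\end{equation}
The plan is to feed into \eqref{key-plan} the one-parameter families $p=F_n-\beta F_{n-m}$ ($m\ge1$ fixed, $\beta\in\C$ free). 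This is exactly the device that settles the problem on the unit circle, which is the model case: there $K=\bar\D$, one takes $p=w^n-\beta w^{n-m}$, and $|w|\equiv1$ on $\T$ turns \eqref{key-plan} into $1+|\beta|^2-2\Re(\bar\beta\int_\T w^m\,d\mu)\ge|w_0|^{-2m}|w_0^m-\beta|^2$; letting $\beta$ run over $\C$ forces $\int_\T w^m\,d\mu=\overline{w_0}^{-m}$ for all $m\le n$, and since the balayage of $\delta_{w_0}$ onto $\T$ (the Poisson measure of $1/\overline{w_0}\in\D$) has precisely these moments, the minimizers converge weak-$*$ to it. The Faber expansion above lets us transport this computation to $\Gamma$, with $\Phi$ in place of $w$ and errors that are powers of $(1+|\beta|)$ times $\varepsilon_n$.

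Carrying it out: for $p=F_n-\beta F_{n-m}$, using $\|F_k-\Phi^k\|_\Gamma\le\varepsilon_k$ and $|\Phi|\equiv1$ on $\Gamma$,
\[
\int_\Gamma|p|^2\,d\nu_n=\int_\Gamma|\Phi^m-\beta|^2\,d\nu_n+O\big((1+|\beta|)^2\varepsilon_n\big)=1+|\beta|^2-2\Re\Big(\bar\beta\!\int_\Gamma\!\Phi^m\,d\nu_n\Big)+o(1),
\]
while $|p(z_0)|^2=|\Phi(z_0)|^{2(n-m)}\big(|\Phi(z_0)^m-\beta|^2+o(1)\big)$, the $o(1)$'s being as $n\to\infty$ with $m$ and $\beta$ fixed. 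Along a subsequence on which $\int_\Gamma\Phi^m\,d\nu_n\to c_m$, letting $n\to\infty$ in \eqref{key-plan} yields
\[
1+|\beta|^2-2\Re(\bar\beta c_m)\ \ge\ |\Phi(z_0)|^{-2m}|\Phi(z_0)^m-\beta|^2=1-2\Re\big(\bar\beta\,\overline{\Phi(z_0)}^{-m}\big)+|\Phi(z_0)|^{-2m}|\beta|^2,
\]
i.e.\ $\big(1-|\Phi(z_0)|^{-2m}\big)|\beta|^2\ge2\Re\big(\bar\beta(c_m-\overline{\Phi(z_0)}^{-m})\big)$ for all $\beta\in\C$; choosing $\beta=t(c_m-\overline{\Phi(z_0)}^{-m})$ and letting $t\to0^+$ forces $c_m=\overline{\Phi(z_0)}^{-m}$. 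Thus $\int_\Gamma\Phi(z)^m\,d\nu_n(z)\to\overline{\Phi(z_0)}^{-m}$ for every $m\ge0$.

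Since $\Phi|_\Gamma:\Gamma\to\T$ is a homeomorphism and $\Phi^{-m}=\overline{\Phi^{\,m}}$ on $\Gamma$, the functions $\{\Phi^m:m\in\Z\}$ are the pullbacks of the trigonometric monomials and span a dense subalgebra of $C(\Gamma)$; together with $\nu_n\in\MM(\Gamma)$ the convergence just obtained gives $\nu_n\to\mu$ weak-$*$, where $\mu$ is the unique probability measure on $\Gamma$ with $\int_\Gamma\Phi^m\,d\mu=\overline{\Phi(z_0)}^{-m}$ for $m\ge1$. It remains to identify $\mu=\hat\delta_{z_0}$: the balayage $\hat\delta_{z_0}$ is the harmonic measure of $\Omega$ at $z_0$, and since $1/\Phi^m$ is bounded holomorphic on $\Omega$, continuous on $\bar\Omega$, and vanishes at $\infty$, one gets $\int_\Gamma\Phi^{-m}\,d\hat\delta_{z_0}=\Phi(z_0)^{-m}$, hence $\int_\Gamma\Phi^{\,m}\,d\hat\delta_{z_0}=\overline{\Phi(z_0)}^{-m}$ because $\Phi^{-m}=\overline{\Phi^{\,m}}$ on $\Gamma$; so $\mu=\hat\delta_{z_0}$. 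I expect the conceptual skeleton above to be short, and the bulk of the work — the main obstacle — to be the supporting Faber-polynomial analysis on $C(2,\alpha)$ curves: establishing $\varepsilon_k\to0$, the boundary regularity of $\Phi$, and the bookkeeping showing that every error term, each a power of $(1+|\beta|)$ times $\varepsilon_n$, indeed vanishes once $m$ and $\beta$ are frozen.
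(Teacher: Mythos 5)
Your proof is correct, but it follows a genuinely different route from the paper. The paper never computes moments of the $\nu_n$ directly: it first proves (Theorem \ref{optim-D}) that $\hat\delta_z$ is the \emph{unique maximizer} of $\lambda_\infty(\mu,z)$ on $\T$ via Szeg\H{o}'s theorem and Jensen's inequality, transports this to $\Gamma$ through the identity $\tilde\lambda_\infty(\mu,z)=\lambda_\infty(\Phi_*\mu,1/\bar{\Phi(z)})$ (Lemma \ref{44}, Corollary \ref{transport}), and then faces the real difficulty of showing that a weak-* limit $\alpha$ of the $\nu_n$ actually \emph{is} a maximizer --- this requires the weak monotonicity $\tilde B_{N-n}\le c_n\tilde B_N$ with $\prod c_n<\infty$ (Lemma \ref{lem-B-ana}, where the $C(2,\alpha)$ Faber estimate enters) and a diagonal-subsequence argument via Dini's theorem (Lemma \ref{lem-3-sub}). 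You bypass all of this: the single inequality $\lambda_n(\nu_n,z_0)\ge|\Phi(z_0)|^{-2n}$, tested against $F_n-\beta F_{n-m}$ and optimized over $\beta$, pins down $\lim_n\int_\Gamma\Phi^m\,d\nu_n=\overline{\Phi(z_0)}^{-m}$ for each fixed $m$, and since $\Phi|_\Gamma$ is a homeomorphism onto $\T$ these moments determine the weak-* limit by Stone--Weierstrass; this is exactly the moment-matching computation of [BLOC] for the disk, lifted to $\Gamma$ by Faber polynomials, and the only analytic input is the same Suetin estimate $\|F_n-\Phi^n\|_\Gamma\to0$ that the paper cites (your stronger claim $\sum_j|b_{n,j}|\to0$ is not needed; the uniform estimate on $\Gamma$ suffices, and in fact even $o(1)$ rather than summability of the errors suffices for your argument, whereas the paper genuinely needs $\prod c_n<\infty$). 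What your argument gives up is the by-product $\lim_n\tilde B_n(\nu_n,z_0)=1$ (equation (\ref{bnasymp})) and the independent interest of Theorem \ref{optim-D}; what it buys is brevity, the avoidance of Szeg\H{o} theory and the Dini machinery, and --- notably --- it does not obviously break in the way the paper's final Remark describes, since it never invokes the monotonicity lemma (though for an arc the crucial inputs $|\Phi|\equiv1$ on $\Gamma$ and $\Phi|_\Gamma$ injective both fail, so the conclusion correctly does not extend). One small point to make explicit in a final write-up: the passage from convergence of $\int\Phi^m\,d\nu_n$ along a subsequence to convergence of the full sequence (every subsequence has a further subsequence with the same limit $\overline{\Phi(z_0)}^{-m}$), and the fact that $\nu_n$ may be taken in $\MM(\Gamma)$ for $n$ large, which the paper justifies in Section \ref{Prelim}, item 3.
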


\noindent Here, $C(k,\alpha)$ denotes the class of $k-$times continuously differentiable functions with $k-$th derivative satisfying a Lipschitz condition of order $\alpha$.

After some general preliminaries in the next section, in section 3 we complement the study in \cite{BLOC} of the case of $K=\bar\D$, the closed unit disk. We show in Theorem \ref{optim-D} that for $z\in \D$, the balayage $\hat\delta_{z}$ to $ \T :=\partial \D$ is the unique maximizer of $\lambda_{\infty}(\mu,z)$ from (\ref{def-lamb-inf}) among $\mu\in \mathcal M(\T)$. We then study the more general case of $K$ bounded by a $C(1,\alpha)$ curve $\Gamma$ in Section \ref{C1}, and then by a $C({2,\alpha)}$ curve in Section \ref{C2}. To derive Theorem \ref{keyth} in this setting, for $z\in \Omega$ we make a connection between $\tilde \lambda_{\infty}(\nu,z)$, a modification of $\lambda_{\infty}$ for measures $\nu$ supported on $\Gamma$, with $\lambda_{\infty}(\Phi_{*}\nu,1/\bar{\Phi(z)})$ where $\Phi_{*}\nu$ is the push-forward of $\nu$ on $\T$, $\Phi$ being a conformal map from the exterior of $\Gamma$ to the exterior of $\T$. After some preliminary results, an outline of the proof is given in Section \ref{C1}, followed by the details. We conclude with an interesting observation on the distinction between the cases of $K$ being a curve versus $K$ being an arc.
\\[5pt]
\noindent{\bf Acknowledgements}: We would like to thank the referee for several useful suggestions, including a shorter proof of Theorem 3.2.
%%%%%%%%%%%%%%%%
\section{General preliminaries}\label{Prelim}

We begin with an elementary result.
\begin{lemma}\label{low-sc}
Let $K$ be a subset of $\C$, $n$ a given positive integer, and $z\in\C$. Then
the map $\nu\in\MM(K)\mapsto B_{n}(\nu,z)\in(0,\infty]$ is weak-$*$ lower semicontinuous.
\end{lemma}
\begin{proof}
By (\ref{B-lambda}), it is equivalent to check that the map $\nu\mapsto\lambda_{n}(\nu,z)$ is upper semicontinuous, which is true since, for each $p\in\PP_{n}$, $p(z)=1$, the map
$$
\nu\mapsto\int_{K}|p|^{2}d\nu
$$
is weak-$*$ continuous, and $\lambda_{n}(\nu,z)$ is obtained as a minimum of such maps.
\end{proof}

We continue with some observations related to \cite{BLOC}. 
\begin{enumerate}
\item The max-min in (\ref{max-min}) coincides with the min-max for general compact $K$, namely
\begin{equation}\label{max-min2}
\max_{\nu\in\MM(K)}\min_{p\in\PP_{n},~p(z_{0})=1}\int_{K}|p|^{2}d\nu=
\min_{p\in\PP_{n},~p(z_{0})=1}\max_{\nu\in\MM(K)}\int_{K}|p|^{2}d\nu.
\end{equation}
This follows from the classical minimax theorem, see the proof of \cite[Proposition 2.1]{BLOC}. 
\item Let $K\subset \mathbb C$ be compact and contain infinitely many points and fix $z_0\not \in K$. For $n\in \mathbb N$, let 
\begin{equation}\label{maxpoly} M_n=M_n(z_0,K):=\sup \{|p(z_0)|: p\in \mathcal P_n, \ ||p||_K\leq 1\}.\end{equation}
There exists a unique $p_n \in \mathcal P_n$ with $||p_n||_K=1$ and $p_n(z_0)=M_n$; in \cite{BLOC} this is called the {\it polynomial of extremal growth relative to $K$ at $z_0$.} Indeed, note that 
$$M_n = \sup \{\frac{|p(z_0)|}{||p||_K}: p\in \mathcal P_n\}= [\inf \{ \frac{||p||_K}{|p(z_0)|}: p\in \mathcal P_n\}]^{-1}$$
and 
\begin{align*}
\inf \{ \frac{||p||_K}{|p(z_0)|}: p\in \mathcal P_n\} & = \inf \{ ||p||_K: p\in \mathcal P_n, \ |p(z_0)|=1\}
\\[5pt]
& =\inf \{ ||1-Q||_K: Q\in \mathcal P_n, \ Q(z_0)=0\}.
\end{align*}
Let $V_n:= \{Q\in \mathcal P_n, \ Q(z_0)=0\}$. This is an $n-$dimensional complex vector space, and 
clearly each nonzero $Q\in V_n$ has at most $n-1$ zeros in $K$ (since $Q(z_0)=0$).  By the classical  Haar uniqueness theorem in Chebyshev approximation (cf., \cite{A}, \cite[Theorem 19]{M}), every 
continuous, complex-valued function on $K$ admits a unique best sup-norm approximant from $V_n$. Applying this to the constant function $1$ there 
exists a unique $Q_n \in V_n$ with $M_n=[||1-Q_n||_K]^{-1}$, and thus $p_n=1-Q_n$. 
%{\color{magenta}Note that $Q_n$ (and hence $p_n$) is non-constant as $Q_n(z_0)=0$ ? it seems it may happen that $Q_{n}=0$, for instance if $K$ is made of two small intervals around $1$ and $-1$, $n=1$, and $z_{0}=0$ ? Is there a simple condition so that $Q_{n}$ is non constant ?}

\item From 2. and Remark 2.3 in \cite{BLOC}, it follows that the support of an OPM $\nu_n$ of order $n$ for $K$ and $z_0$ as in 2. is contained in 
$$S_n(K):=\{z\in K: |p_n(z)|=||p_n||_K\}.$$
The set $\{z\in \C: |p_n(z)|=||p_n||_K\}$ is a real algebraic curve in $\R^2$ of degree at most $2n$. In particular, for $z_{0}\in\Omega$, the unbounded component of $\C\setminus K$, if $p_n$ is non-constant, any OPM $\nu_{n}$ for $K$ is supported on $\partial\Omega$. A necessary and sufficient condition that $p_n$ be non-constant is that $z_0$ lie outside of 
$$\hat K_n:= \{z\in \C: |q_n(z)|\leq ||q_n||_K \ \hbox{for all} \ q_n \in \mathcal P_n\},$$
the $n-$th order polynomial hull of $K$. Since these sets $\hat K_n$ decrease to
$$\hat K:= \{z\in \C: |q(z)|\leq ||q||_K \ \hbox{for all} \ q \in \bigcup_n \mathcal P_n\},$$
the polynomial hull of $K$, and $\Omega =\C\setminus \hat K$, by appealing to either the Hilbert lemniscate theorem (cf.,\cite[Theorem 5.5.8]{Ra}) or simply Runge's theorem, 
for any $z_0\in \Omega$, there exists $n_0$ sufficiently large so that $p_n$ is non-constant for $n\geq n_0$. 

\end{enumerate}

Thus if, e.g., $K$ is an ellipse of the form 
$$K=\{(x,y)\in \R^2: x^2/a^2 +y^2/b^2 =1\}$$
with $a\not = b$ and $z_0$ lies outside $K$, by Bezout's theorem $S_n(K)$ 
contains at most $4n$ points. Since an OPM $\nu_n$ exists, the support of $\nu_n$ contains at least $n+1$ points. On the other hand, we recall in the next section that for the unit circle $\T=\{(x,y)\in \R^2: x^2 +y^2 =1\}$ and a point $z_0$ with $|z_0|>1$, there exist OPM's $\nu_n$ which are absolutely continuous with respect to arclength measure and hence with support $\T$. It follows from 3. that OPM's for $\bar \D$ and $\T$ coincide. More generally, if $K$ is a compact subset bounded by a $C(2,\alpha)$ curve $\Gamma=\p\Omega$ (as in section 4) and $z_0\in \Omega$, OPM's $\nu_n$ for $K$ and $\Gamma$ coincide, at least for $n$ sufficiently large, which we will always assume.

%%%%%%%%%%%%%%%%%%%
\section{The unit disk $\D$} 
We begin by recalling that the harmonic measure $\omega_{\D}(z,t)$ for the disk $\D$ and a point $z=|z|e^{i\phi}\in\D$ is given by
\begin{equation}\label{Pois0}
d\omega_{\D}(z,t)=\frac{1-|z|^{2}}{|e^{it}-z|^{2}}\frac{dt}{2\pi}=
\left[\sum_{k=-\infty}^{\infty}\left|z\right|^{|k|} e^{i k(\phi-t)}\right] \frac{dt}{2 \pi}=\Re \bigl(\frac{e^{it} +z}{e^{it} -z}\bigr)\frac{dt}{2 \pi}=:P_z(e^{it})\frac{dt}{2 \pi},
\end{equation}
see e.g.\ \cite[Chapter 4.3]{Ra}. In particular, $d\omega_{\D}(0,t)={dt}/{2 \pi}$. It may also be defined as the balayage $\hat\delta_{z}$ of the Dirac mass $\delta_{z}$ onto the unit circle $\T$, see \cite[Appendix A.3]{ST} or, by conformal invariance, the balayage $\hat\delta_{1/\bar z}$ of $\delta_{1/\bar z}$ onto $\T$.
\begin{definition}
 A positive and finite measure $\mu$ on the unit circle $\T$ satisfies the Szeg\H{o} condition if its density $f=d\mu/d\theta$ satisfies
$$
\int_{\T}\log fd\theta>-\infty.
$$ 
Then, the Szeg\H{o} function is defined by
\begin{equation}\label{def-sz}
D(\mu,z)=\exp\left(\frac{1}{4\pi}\int_{0}^{2\pi}\frac{e^{it}+z}{e^{it}-z}\log f(t)dt\right),\qquad|z|<1.
\end{equation}
\end{definition}
Note that, with $\mu_{a}$ the absolutely continuous part of $\mu$, and $\lambda>0$, one has
\begin{equation}\label{D-pty}
D(\mu,z)=D(\mu_{a},z),\qquad D(\lambda\mu,z)=\sqrt\lambda D(\mu,z).
\end{equation}
It is known, see \cite[Theorem 2.4.1]{S}, that for any measure $\mu$ satisfying the Szeg\H{o} condition,
\begin{equation}\label{asymp-lamb}
\lambda_{\infty}({\mu},z)=(1-|z|^{2})|D(\mu,z)|^{2},\qquad|z|<1.
\end{equation}
We also recall that for any measure $\mu$ on $\T$ the Christoffel function satisfies
\begin{equation}\label{christinv}
|z|^{2n}\lambda_{n}({\mu},z)
=\lambda_{n}({\mu},1/\bar{z}),\qquad z\neq0,
\end{equation}
see e.g.\ \cite[Lemma 2.2.8]{S}. These relations (\ref{asymp-lamb}) and (\ref{christinv}) will be crucial in the sequel, as will the unicity in the next result.

\begin{theorem}\label{optim-D}
Let $z\in\D$.
The unique probability measure $\mu$ on $\T$ that maximizes $\lambda_{\infty}({\mu},z)$,  is the balayage measure $\hat\delta_{z}$.
\end{theorem}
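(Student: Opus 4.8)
The plan is to exploit the two displayed identities (\ref{asymp-lamb}) and (\ref{christinv}) together with the Hardy-space extremal characterization of the Szeg\H{o} function. Write $z = |z|e^{i\phi}$ with $0<|z|<1$, and set $r=|z|$. Using (\ref{christinv}) with $n\to\infty$ one gets $\lambda_\infty(\mu,z) = r^{-2}\,\lambda_\infty(\mu,1/\bar z)$ is not directly useful since $1/\bar z$ lies outside $\T$; instead the relevant route is (\ref{asymp-lamb}): for any $\mu$ on $\T$ satisfying the Szeg\H{o} condition, $\lambda_\infty(\mu,z) = (1-r^2)|D(\mu,z)|^2$, while for $\mu$ \emph{not} satisfying the Szeg\H{o} condition one has $D(\mu_a,z)$ still defined from the a.c.\ part, and a short argument (monotone limit of the $\lambda_n$, together with $\lambda_n(\mu,z)\le\lambda_n(\mu_a,z)$ being false in general — rather one compares through the variational definition) shows $\lambda_\infty(\mu,z)=\lambda_\infty(\mu_a,z)=(1-r^2)|D(\mu_a,z)|^2$. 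Hence maximizing $\lambda_\infty(\mu,z)$ over probability measures $\mu\in\mathcal M(\T)$ is equivalent to maximizing $|D(\mu_a,z)|$ over a.c.\ parts of probability measures, i.e.\ over densities $f\ge0$ with $\int_{\T} f\,d\theta\le 1$.

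Next I would make the substitution $g = \sqrt{f}$, so that $\int |g|^2\,d\theta/(2\pi)\le 1$ after normalizing, and observe from (\ref{def-sz}) that $|D(\mu_a,z)|^2 = \exp\big(\frac{1}{2\pi}\int P_z(t)\log f(t)\,dt\big)$ where $P_z$ is the Poisson kernel for $\D$ at $z$, i.e.\ $d\omega_\D(z,t) = P_z(t)\,dt/(2\pi)$ from (\ref{Pois0}). Thus $\log|D(\mu_a,z)|^2 = \int_{\T}\log f\, d\omega_\D(z,\cdot)$. By Jensen's inequality with respect to the probability measure $\omega_\D(z,\cdot)$,
$$
\int_{\T}\log f\, d\omega_\D(z,\cdot) \;\le\; \log\!\int_{\T} f\, d\omega_\D(z,\cdot),
$$
with equality if and only if $f$ is $\omega_\D(z,\cdot)$-a.e.\ constant, i.e.\ (since $P_z>0$ everywhere) Lebesgue-a.e.\ constant. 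To finish, I must show the right-hand side is itself maximized, over $f\ge0$ with $\int f\,d\theta\le 2\pi$ (the probability-measure constraint), exactly when $f$ is the constant equal to the value forcing $\mu = \hat\delta_z$, i.e.\ $f = P_z$ — but wait: the equality case of Jensen already forces $f\equiv c$ constant, and then the probability constraint forces $c = 1$, whereas $P_z$ is \emph{not} constant. So the genuine extremizer cannot come purely from the Jensen equality case; the correct bookkeeping is that one should not fix $f$ constant but rather optimize the functional $f\mapsto \int\log f\,d\omega_\D(z,\cdot) - \text{(Lagrange term for }\int f\,d\theta=2\pi)$, whose Euler--Lagrange equation gives $f$ proportional to $1/P_z$... let me instead phrase it as: maximize $\int\log f\,d\omega_\D(z,\cdot)$ subject to $\int f\,d\theta=2\pi$; writing $f\,d\theta/(2\pi) = h\,d\omega_\D(z,\cdot)$ with $\int h\,d\omega_\D(z,\cdot)=1$ after accounting for the density change, one applies Jensen to the probability measure $\omega_\D(z,\cdot)$ in the variable $h$ and obtains the maximum precisely when $h$ is $\omega_\D(z,\cdot)$-a.e.\ constant $=1$, i.e.\ $f\,d\theta = 2\pi\,d\omega_\D(z,\cdot)$, i.e.\ $\mu = \omega_\D(z,\cdot) = \hat\delta_z$. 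This also pins down uniqueness: the a.c.\ part of any maximizer equals $\hat\delta_z$, which is already a probability measure, so there is no room for a singular part.

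The main obstacle I anticipate is the reduction step ensuring that passing to the a.c.\ part is lossless, i.e.\ proving $\lambda_\infty(\mu,z) = \lambda_\infty(\mu_a,z)$ for \emph{every} $\mu\in\mathcal M(\T)$, including those failing the Szeg\H{o} condition (where $\lambda_\infty$ could a priori be $0$ from a ``thin'' a.c.\ part but one must rule out the singular part helping). I would handle this by the variational inequality $\lambda_n(\mu,z)\ge\lambda_n(\mu_a,z)$ (more mass can only increase the infimum of $\int|p|^2d\mu$ over $p(z)=1$... actually the inequality goes $\lambda_n(\mu,z)\ge\lambda_n(\mu_a,z)$ since $\mu\ge\mu_a$), giving $\lambda_\infty(\mu,z)\ge\lambda_\infty(\mu_a,z)=(1-r^2)|D(\mu_a,z)|^2$; and for the reverse, by a known result (e.g.\ the theory of Christoffel functions, \cite[Theorem 2.4.1]{S} or its proof) the singular part does not contribute to $\lambda_\infty$ at points of $\D$, so in fact $\lambda_\infty(\mu,z)=(1-r^2)|D(\mu_a,z)|^2$ for all $\mu$. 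Modulo this input, the rest is the Jensen/Lagrange computation above, which is routine; careful attention to the equality case and to the change of measure from $d\theta$ to $d\omega_\D(z,\cdot)$ is what delivers both existence of the maximizer $\hat\delta_z$ and its uniqueness.
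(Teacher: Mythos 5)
Your proposal is correct and follows essentially the same route as the paper: reduce via the Szeg\H{o} asymptotics (\ref{asymp-lamb}) to maximizing $|D(\mu_a,z)|$, discard the singular part using the homogeneity $D(\lambda\mu,z)=\sqrt{\lambda}\,D(\mu,z)$, and conclude with Jensen's inequality together with its equality case. The only (cosmetic) difference is that your final, corrected bookkeeping applies Jensen with respect to the harmonic measure $\omega_\D(z,\cdot)$ to the ratio $h=f/P_z$, whereas the paper performs the equivalent explicit M\"obius change of variables $\varphi(u)=(u+z)/(1+\bar z u)$ to reduce to Jensen for normalized Lebesgue measure; your first attempt (Jensen applied to $f$ itself) was indeed the wrong normalization, and the self-correction should be cleaned out of a final write-up.
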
 
\begin{proof}
Let $\mu$ be a probability measure on $\T$. By \cite[Theorem 2.7.15]{S}, we have $\lambda_{\infty}({\mu},z)=0$ for any $z\in\D$ precisely when $\mu$ does not belong to the Szeg\H{o} class. Thus, we may assume that $\mu$ belongs to the Szeg\H o class. By (\ref{asymp-lamb}), we are led to maximize $|D(\mu,z)|$. Let $\mu=\mu_{a}+\mu_{s}$, $\mu_{a}=gdt$, $g\in L^{1}(\T)$, be the Radon-Nikodym decomposition of the measure $\mu$. From (\ref{D-pty}), we see that the larger the mass of $\mu_{a}$, the larger the modulus of $D(\mu,z)$. Thus, $\mu_{s}$ should vanish, that is, $\mu$ has to be absolutely continuous. 

We write 
$$d\mu = f(e^{it})d\omega_{\D}(0,t) = \frac{f(e^{it})}{P_z(e^{it})} d\omega_{\D}(z,t).$$ 
Then since
$$\int _{-\pi}^{\pi} \log f(e^{it})d\omega_{\D}(0,t) = \int _{-\pi}^{\pi} \log  \frac{f(e^{it})}{P_z(e^{it})} d\omega_{\D}(z,t)+\int _{-\pi}^{\pi} \log P_z(e^{it})d\omega_{\D}(z,t),$$
it suffices to maximize the entropy functional
$$ \int _{-\pi}^{\pi} \log  \frac{f(e^{it})}{P_z(e^{it})} d\omega_{\D}(z,t)=\int _{-\pi}^{\pi}  \log (\frac{d\mu}{d\omega_{\D}(z,t)}) d\omega_{\D}(z,t)$$
over absolutely continuous probability measures $\mu$. Jensen's inequality yields that the maximum is attained, uniquely, when $d\mu/d\omega_{\D}(z,t)=1$; i.e., $d\mu =d\omega_{\D}(z,t)$.

\end{proof}

It is proved in \cite{BLOC} that, for a given degree $n$ and $z_{0}=|z_{0}|e^{i\phi}$ with $|z_0|>1$, the harmonic measure (\ref{Pois0}) for $1/\bar z_{0}$, %supported on $\T$,
$$
\mathrm{d} \mu_{P}(\theta):=d\omega_{\D}(1/\bar z_{0},\theta)=\left[\sum_{k=-\infty}^{\infty}\left|z_{0}\right|^{-|k|} e^{i k(\phi-\theta)}\right] \frac{\mathrm{d} \theta}{2 \pi},
$$
is an OPM of order $n$ for $\bar \D$ and $z_0$, as well as any measure $\mu$ whose moments
$$
m_{k}=m_{k}(\mu):=\int_{\T}z^{k}d\mu,\qquad k=0,\pm1,\ldots,\pm n
$$
coincide with those of the harmonic measure:
$$
m_{k}(\mu_{P}):=\int_{\T}z^{k}d\mu_{P}=\int_{\T}z^{k}d \hat \delta_{1/\bar z_{0}}=
\begin{cases}
(\bar z_{0})^{-k}, & k\geq0,
\\[5pt]
z_{0}^{k}, & k<0,
\end{cases}
\qquad |k|=0,1,\ldots, n.
$$
Moreover, from (\ref{max-growth}) and (\ref{maxpoly}), since $M_n(z_0,\T)=|z_0|^n$ we have, for $n\geq0$,
\begin{equation}\label{bnmup}
B_{n}(\mu_{P},z_{0})=|z_{0}|^{2n},\qquad\lambda_{n}(\mu_{P},z_{0})=|z_{0}|^{-2n}.
\end{equation}
%%%%%%%%%%%%%%%%%%%%%%%%%%%%%%%%%%%%%%%%%%
%\section{$K$ bounded by a curve $\Gamma\in C(2,\alpha)$}
\section{Preliminaries when $K$ is bounded by a $C(1,\alpha)$ curve}\label{C1}
Let $K$ be a connected, simply connected, compact subset of $\C$, with nonempty interior.
Let $\Omega$ be the unbounded component of $\C\setminus K$ and $\Omega_{\infty}:=\Omega\cup\{\infty\}$. 
Let $\Phi$ be the conformal map from $\Omega$ to $\C\setminus\D$, with $\Phi(\infty)=\infty$ and $\Phi'(\infty)>0$. 
In this section, we assume that $\Gamma=\p\Omega$ is a $C(1,\alpha)$ curve. 
For $r\geq 1$, we define the level curves of $\Phi$,
$$\Gamma_{r}:=\{z\in\Omega: ~|\Phi(z)|=r\}.$$ 
We will need several results.

We recall a result about sequences of conformal maps, see \cite{W}, Theorem 4 of Section 2.3.

\begin{theorem} \label{walsh} Let $J\subset \C$ be a Jordan curve and
let $D$ be the bounded component of $\C \setminus J$.
Let $\{ D_n \}_{n=1}^\infty$ be a sequence of bounded, 
simply connected domains such that $\overline{D}_{n+1}\subset D_n$ for each $n$ and 
$$
  \bigcap _{n=1} ^\infty D_n  =  \overline{D}.
$$
Given $z_0 \in D$, let
$F,\ F_n$, $n\geq1$, be the conformal
mappings of $D,\ D_n$, $n\geq1$, onto $\D$ which take $z_0$ to the origin and such that $F'(z_0) > 0$ and $F_n'(z_0) > 0$
for each $n$. Then we have
$$
  \lim _{n \to \infty} F_n (z)  =  F(z)  \quad
     {\rm uniformly,\ for\ all\ } z \in \overline{D} .  
$$
\end{theorem}

We will also make use of the Faber polynomials $F_{n}$, $n\geq0$, of the interior %$U$ 
of $K$, see \cite{Su2}. They are defined by the following identity, see \cite[p.62]{Su2}:
$$
F_{n}(z)=\Phi^{n}(z)+\frac{1}{2 \pi i}\int_{\Gamma_{r}}\frac{\Phi^{n}(t)}{t-z}dt,\qquad |\Phi(z)|>r\geq1,
$$
where we recall that $\Gamma_{r}:=\{z\in\Omega: ~|\Phi(z)|=r\}$.
% has been defined at the beginning of the section.
\begin{proposition}[{\cite[p.61]{Su2}}]\label{Suet-decr}
Let $\Gamma$ be a $C(1,\alpha)$ curve, and let $F_{n}$, $n\geq0$, be the associated Faber polynomials. Let $F$ be a closed subset of the interior of $K$. Then, there is a constant $c(F)$ such that
$$
|F_{n}(t)|\leq\frac{c(F)}{n^{\alpha}},\qquad t\in F.
$$
\end{proposition}
\begin{remark}
The above result on the decrease of Faber polynomials in $K$ also holds for piecewise analytic curves $\Gamma$, see \cite[Theorem 1]{G}.
\end{remark}
\begin{proposition}[{\cite[Theorem 2 p.68]{Su2}}]\label{Fab-asymp}
When $\Gamma$ is a $C(p+1,\alpha)$ curve, $p\in\N$, the following estimate holds:
\begin{equation}\label{Faber-1}
F_{n}(z)=\Phi^{n}(z)
%\left(1
+\OO\left(\frac{\log n}{n^{p+\alpha}}\right)%\right)
,\quad n\to\infty,\end{equation}
uniformly for $z\in\bar\Omega$. 
\end{proposition}

We denote by $A(\bar\Om_{\infty})$ the set of functions analytic in a neighborhood of $\bar\Om_{\infty}$.

\begin{proposition}\label{prop-Pad}
Given a function $g \in A(\bar\Om_{\infty})$, $Q_{n}$ any polynomial of degree at most $n$, and $P_{n}$ the unique polynomial of degree at most $n$ such that
\begin{equation}\label{diff-Pad}
Q_{n}(z)g(z)-P_{n}(z)=\OO(z^{-1}),\qquad z\to\infty,
\end{equation}
one has 
\begin{equation}\label{Int-Pade}
Q_{n}(z)g(z)-P_{n}(z)=-\frac{1}{2\pi i}\int_{\Gamma_{g}}Q_{n}(t)\frac{g(t)}{t-z}dt
\end{equation}
for $z$ outside of $\Gamma_{g}$, a simple, positively oriented, curve lying in $K$ and in the domain of analyticity of $g$. 
\end{proposition}
\begin{proof}
Because of (\ref{diff-Pad}), the identity (\ref{Int-Pade}) is a simple consequence of
Cauchy's formula applied to the difference $Q_{n}(z)g(z)-P_{n}(z)$ outside of $\Gamma_{g}$.
\end{proof}
Let $\mu$ be a probability measure on $\Gamma$. We set, for $z\in\Omega$,
\begin{equation} \label{bneqn}
\tilde B_{n}(\mu,z):=\frac{B_{n}(\mu,z)}{|\Phi(z)|^{2n}} \quad \hbox{and} \quad
\tilde\lambda_{n}(\mu,z):=|\Phi(z)|^{2n}\lambda_{n}(\mu,z),\qquad n\geq0,
\end{equation}
and
\begin{equation} \label{binfeqn}
\tilde B_{\infty}(\mu,z):=\limsup_{n\to\infty}\tilde B_{n}(\mu,z)\leq\infty \quad \hbox{and} \quad
\tilde\lambda_{\infty}(\mu,z):=\liminf_{n\to\infty}\tilde\lambda_{n}(\mu,z)\geq0.
\end{equation}
In fact, in Lemma \ref{44} below, we show that the limits exist in (\ref{binfeqn}). Note that since $B_n(\mu,z)$ is weak-* continuous for our class of measures so is $\tilde B_{n}(\mu,z)$.

The idea behind our proof that the weak-* limit of any subsequence $\{\nu_n\}_{n \in Y}, \ Y \subset \N$ of OPM's for $\Gamma$ and $z$ is $\hat \delta_z$, the balayage of the point mass at $z$ to $\Gamma$, is as follows. Using Propositions \ref{Suet-decr}, \ref{Fab-asymp} and \ref{prop-Pad}, we first show in Lemma \ref{44} and Corollary \ref{transport} that for any probability measure $\mu$ on $\Gamma$, $\tilde\lambda_{\infty}(\mu,z)$ (and hence $\tilde B_{\infty}(\mu,z)$) is related to $\lambda_{\infty}(\Phi_{*}\mu,1/\bar{\Phi(z)})$ where $\Phi_{*}\mu\in \mathcal M (\T)$. 
%$\Phi$ being a conformal map from the exterior of $\Gamma$ to the exterior of $\T$. 
The crux of the matter is to then show that if $\alpha$ is a weak-* limit of a subsequence $\{\nu_n\}_{n \in Y_1}, \ Y_1 \subset Y$ then (a perturbation of) a ``diagonal subsequence'' $\{\tilde B_n(\nu_n,z)\}_{n\in Y_1}$, converges to (a perturbation of) $\tilde B_{\infty}(\alpha,z)$ (Lemma \ref{lem-3-sub}). As in the proof of Lemma \ref{44}, we use Faber polynomials in Lemma \ref{lem-B-ana} as a tool to prove a sort of monotonicity of $\{\tilde B_{n}(\mu,z)\}$ in $n$ for general $\mu$ which is needed to apply Dini's theorem to conclude the proof of Lemma \ref{lem-3-sub}. After the proof of our main result, we make a remark to indicate a  relationship with an underlying general potential-theoretic question.

\begin{lemma} \label{44}
Let $z\in\Om$ and let $\mu$ be a measure on $\Gamma$. We have
\begin{equation}\label{sim}
%\liminf_{n\to\infty} 
\tilde\lambda_{\infty}(\mu,z)=\inf\left\{\int_{\Gamma}|f|^{2}d\mu,~f\in A(\bar\Om_{\infty}),~f(z)=1\right\}.
\end{equation}
Moreover, $\tilde\lambda_{\infty}(\mu,z)=\lim_{n\to\infty}\tilde\lambda_{n}(\mu,z)$, i.e., the limit exists.
\end{lemma}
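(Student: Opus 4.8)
The plan is to establish \eqref{sim} by a two-sided inequality and to get the existence of the limit as a byproduct of the argument. Write $c(\mu,z)$ for the infimum on the right-hand side of \eqref{sim}, and recall that $\lambda_n(\mu,z) = \min\{\int |p|^2 d\mu : p \in \PP_n,\ p(z)=1\}$, so that $\tilde\lambda_n(\mu,z) = |\Phi(z)|^{2n}\lambda_n(\mu,z)$. The basic idea is that dividing a polynomial $p\in\PP_n$ by $\Phi^n$ produces, in the limit, a candidate function in $A_e(\Gamma)$ normalized at $z$, and conversely.

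\emph{Upper bound: $\tilde\lambda_\infty(\mu,z)\le c(\mu,z)$.} Fix $f\in A_e(\Gamma)$ with $f(z)=1$. I would approximate $f$ on $\bar\Omega_\infty$ by rational-type functions using the Padé-type approximants of Proposition~\ref{Amb}: take $Q_n$ to be a monic multiple of the Faber polynomial $F_n$ (suitably normalized), so that the $Q_n$ have no zeros in $\Omega$ for large $n$ (by the stated zero-location property of Faber polynomials) and $\|Q_n\|_K^{1/n}\to\capa(K)$; then by part 3) of Proposition~\ref{Amb} the approximants $P_n/Q_n\to f$ locally uniformly on $\Omega_\infty$, in particular uniformly on $\Gamma$ since $\Gamma$ is analytic and $f$ extends inside. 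Setting $p_n := P_n \cdot (\text{correction})$, or more precisely building from $P_n/Q_n$ a polynomial of degree at most $n$ that is close to $f$ on $\Gamma$ and equals $1$ at $z$ after renormalizing by the value at $z$ (which tends to $1$), I get polynomials $p_n\in\PP_n$ with $p_n(z)=1$ and $\|p_n - f\|_\Gamma\to 0$. Using \eqref{Faber-1} to control $|\Phi(z)|^{2n}$ against the leading behavior — here the key point is that $\tilde\lambda_n$ is designed precisely so that the factor $|\Phi(z)|^{2n}$ cancels the exponential growth coming from $\Phi^n(z)$ in the Faber expansion — one obtains $\tilde\lambda_n(\mu,z)\le |\Phi(z)|^{2n}\int|p_n|^2 d\mu \to \int |f|^2 d\mu$. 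Taking the infimum over $f$ gives the bound on $\liminf \tilde\lambda_n = \tilde\lambda_\infty$.

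\emph{Lower bound and existence of the limit: $\liminf_n \tilde\lambda_n(\mu,z) \ge c(\mu,z)$, and in fact $\lim_n$ exists and equals $c(\mu,z)$.} For each $n$ let $p_n\in\PP_n$ be the Christoffel extremal polynomial, $p_n(z)=1$, $\int|p_n|^2 d\mu = \lambda_n(\mu,z)$. Consider $g_n := p_n / \Phi^n$ on $\Omega$; this is analytic on $\Omega_\infty$ (since all zeros of a high-degree extremal polynomial lie in $K$ — one should check the $p_n$ have no zeros in $\Omega$, e.g. since $\nu_n$-extremal polynomials, or generally because otherwise one could reflect zeros to decrease the integral) and continuous up to $\Gamma$, with $g_n(z) = |\Phi(z)|^{-n}\cdot(\text{unimodular on }\Gamma)$... more carefully, $|g_n| = |p_n|$ on $\Gamma$ since $|\Phi|=1$ there, and $|g_n(z)| = |\Phi(z)|^{-n}$. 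Renormalizing $h_n := g_n/g_n(z)\in A_e(\Gamma)$ with $h_n(z)=1$, one has $\int |h_n|^2 d\mu = |\Phi(z)|^{2n}\int |p_n|^2 d\mu = \tilde\lambda_n(\mu,z)$, hence $c(\mu,z)\le \tilde\lambda_n(\mu,z)$ for every $n$; this gives both the lower bound for the $\liminf$ and simultaneously shows $\tilde\lambda_n \ge c(\mu,z)$ for all $n$. Combined with the upper bound $\limsup_n \tilde\lambda_n \le c(\mu,z)$ from the previous paragraph, the limit exists and equals $c(\mu,z)$, which is \eqref{sim}.

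The main obstacle I anticipate is the careful bookkeeping in the upper-bound step: one must convert the Padé-type/Faber rational approximant $P_n/Q_n$ of $f$ into an honest polynomial of degree $\le n$ that is still uniformly close to $f$ on $\Gamma$ and exactly normalized at $z$, while keeping precise track of how the normalization and the $|\Phi(z)|^{2n}$ factor interact with the $\OO(r^n/((R-r)R^n))$ error in \eqref{Faber-1} — choosing $r$ slightly bigger than $1$ and working on a level curve $\Gamma_R$ with $R>|\Phi(z)|$, or alternatively handling $z$ directly, requires some care. The lower bound, by contrast, is essentially formal once one knows the extremal polynomials $p_n$ have no zeros in $\Omega$, which is the other point that needs a clean justification.
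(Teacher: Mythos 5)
Your overall strategy is the same as the paper's: the bound $\tilde\lambda_n(\mu,z)\ge \inf\{\int|f|^2d\mu\}$ for \emph{every} $n$ by passing from $p\in\PP_n$ to $p\,\Phi(z)^n/\Phi^n\in A_e(\Gamma)$, and the reverse bound on the $\limsup$ by Faber/Pad\'e approximation of a near-extremal $f$. One remark on the first half: the zero-freeness of the extremal polynomials $p_n$ in $\Omega$, which you flag as a point "needing a clean justification," is not needed at all. The function $p_n/\Phi^n$ is analytic on $\Omega_\infty$ for \emph{any} $p_n\in\PP_n$, because the only thing that must not vanish is the denominator $\Phi$ (and $|\Phi|>1$ on $\Omega$), while $\deg p_n\le n$ gives analyticity at infinity; zeros of $p_n$ in $\Omega$ are harmless for membership in $A_e(\Gamma)$. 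So that direction is complete as written, and chasing the zero-freeness claim would be pursuing something neither required nor obviously true for general $\mu$.

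The upper bound, however, has a genuine gap. Proposition \ref{Amb} gives $P_n/F_n\to f$ \emph{locally uniformly in the open set} $\Omega_\infty$; this does not yield uniform convergence on $\Gamma=\partial\Omega$, where $\mu$ lives, and your justification --- that "$f$ extends inside since $\Gamma$ is analytic" --- is false: analyticity of $\Gamma$ lets $\Phi$ continue across $\Gamma$, but a general $f\in A_e(\Gamma)$ is only continuous on $\bar\Omega_\infty$. Without boundary control you cannot conclude anything about $\int_\Gamma|p_n|^2d\mu$. The paper's device is the dilation $J_r(t)=\Phi^{-1}(r\Phi(t))$, $r>1$: replacing $f$ by $f\circ J_r$, which is analytic \emph{across} $\Gamma$ and uniformly close to $f$ there, costs only $\epsilon$ in $\int|f|^2d\mu$ and places the approximation problem where Proposition \ref{Amb} does give uniform convergence on $\Gamma$. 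Relatedly, the chain you display is internally inconsistent: if $p_n(z)=1$ and $\|p_n-f\|_\Gamma\to0$, then $\int|p_n|^2d\mu\to\int|f|^2d\mu$ and the prefactor $|\Phi(z)|^{2n}$ makes $\tilde\lambda_n$'s upper bound blow up rather than converge. The correct competitor is a renormalization of $P_n\approx fF_n$, namely $p_n=P_n/P_n(z)$, whose modulus on $\Gamma$ is $\approx|f|/|\Phi(z)|^n$ because $|F_n|\approx|\Phi|^n=1$ on $\Gamma$ by \eqref{Faber-1} while $F_n(z)\approx\Phi(z)^n$; you describe this cancellation in words, but the inequalities you wrote do not implement it.
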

\begin{proof} We first show
$$ \tilde\lambda_{\infty}(\mu,z) \geq \inf\left\{\int_{\Gamma}|f|^{2}d\mu,~f\in A(\bar\Om_{\infty}),~f(z)=1\right\}.$$

Let $\Psi$ be a conformal map from $U=\inter(K)$ to $\D$. We consider the level curves $\tilde\Gamma_{k}:=\{|\Psi|=1-1/k\}, \ k=2,3,...$, and let $\Om_{k}$ be the domain outside of $\tilde\Gamma_{k}$. Then $\Om_k \supset \Om_{k+1} \supset \Om$, and $\Om = \inter(\cap_k \Om_k)$.  Letting $\Phi_{k}$ denote the conformal map from $\Om_{k}$ to $\C\setminus\D$ with $ \Phi_k(\infty)=\infty$ and $\Phi_k'(\infty)>0$, it follows from Theorem \ref{walsh} that $\Phi_{k}$ converges locally uniformly in $\Om$ to $\Phi$.

Fix $k\in\N$. We have, for each $n$,
\begin{align*}
\tilde\lambda_{n}(\mu,z) & =|\Phi(z)|^{2n}
\inf\left\{\int_{\Gamma}|p|^{2}d\mu,~p\in\PP_{n},~p(z)=1\right\}
\\[10pt]
& \geq|\Phi(z)|^{2n}
\inf\left\{\int_{\Gamma}\frac{|p|^{2}}{{(|\Phi_{k}|)}^{2n}}d\mu,~p\in\PP_{n},~p(z)=1\right\}
\\[10pt]
& \geq|\Phi(z)|^{2n} \inf\left\{\int_{\Gamma}|f|^{2}d\mu,~f\in A(\bar\Om_{\infty}),~f(z)=\Phi_{k}(z)^{ -n}\right\}.
\\[10pt]
& =\left|\frac{\Phi(z)}{\Phi_{k}(z)}\right|^{2n} \inf\left\{\int_{\Gamma}|f|^{2}d\mu,~f\in A(\bar\Om_{\infty}),~f(z)=1\right\}.
\end{align*}
In the first inequality, we have used that $|\Phi_{k}|>1$ on $\Gamma$, and in the second inequality, we have used that $p/\Phi_{k}^n$ is analytic in a neighborhood of $\bar\Om_{\infty}$.

Letting $k$ tend to infinity, since $z\in \Omega$ we have $\Phi_{k}(z)\to\Phi(z)$ as $k\to\infty$, and thus
$$
\tilde\lambda_{n}(\mu,z)\geq
\inf\left\{\int_{\Gamma}|f|^{2}d\mu,~f\in A(\bar\Om_{\infty}),~f(z)=1\right\},
$$
which implies the desired inequality:

$$\liminf_{n\to \infty} \tilde\lambda_{n}(\mu,z) =  \tilde\lambda_{\infty}(\mu,z) \geq \inf\left\{\int_{\Gamma}|f|^{2}d\mu,~f\in A(\bar\Om_{\infty}),~f(z)=1\right\}.$$

To show that 
$$ \inf\left\{\int_{\Gamma}|f|^{2}d\mu,~f\in A(\bar\Om_{\infty}),~f(z)=1\right\} 
\geq \limsup_{n\to \infty} \tilde\lambda_{n}(\mu,z), $$
given $\epsilon >0$, take $g \in A(\bar\Om_{\infty})$ with $g(z)=1$ and 
$$ 
 \int_{\Gamma}|g|^{2}d\mu\leq (1+\eps)^{2}\inf\left\{\int_{\Gamma}|f|^{2}d\mu,~f\in A(\bar\Om_{\infty}),~f(z)=1\right\} .
$$
We show that for $n \geq n_0(\epsilon)$, we can find $p_n\in\PP_{n}$ such that
\begin{equation}\label{gapp} 
\int_{\Gamma} {|p_n|^{2}} d\mu  \leq (1+\eps)^{2}\int_{\Gamma}|g|^{2}d\mu 
\quad \hbox{and} \quad p_n(z)=\Phi^{n}(z).
\end{equation}
Applying Proposition \ref{prop-Pad} with the function $g$ and the polynomial $Q_{n}=F_{n}$, the $n-$th Faber polynomial for $K$, and making use of Proposition \ref{Suet-decr}, we conclude that, for some constant $c$ independent of $n$,
$$
\left|F_{n}(t)g(t)-P_{n}(t)\right|\leq\frac{c}{n^{\alpha}},\qquad t\in\Gamma\cup\{z\}.
$$
By Proposition \ref{Fab-asymp} applied with $p=0$, we have $F_{n}\to\Phi^{n}$ on $\Gamma$ uniformly, and $F_{n}(z)\to\Phi^{n}(z)$. Since $|\Phi|=1$ on $\Gamma$ and $g(z)=1$ we get
$$
 |P_{n}|\to |g|,~\text{uniformly on }\Gamma, \quad\hbox{and} \quad P_{n}(z)\to\Phi^{n}(z).
$$
Thus, we get,
for $n\geq n_0(\epsilon)$, that
$p_n:= (\Phi^{n}(z)/P_{n}(z))P_n$ satisfy (\ref{gapp}).
\end{proof}

For a measure $\mu$ on $\Gamma$ we have $\Phi_{*}\mu$ is a measure on the circle $\T$. From (\ref{def-lamb-inf}), 
$$
\lambda_{\infty}(\Phi_{*}\mu,1/\bar{\Phi(z)})=\lim_{n\to\infty}\lambda_{n}(\Phi_{*}\mu,1/\bar{\Phi(z)}).
$$

\begin{corollary}\label{transport}
For any measure $\mu$ on $\Gamma$, it holds that
\begin{equation}\label{link-G-D}
\tilde\lambda_{\infty}(\mu,z)
=\lambda_{\infty}(\Phi_{*}\mu,1/\bar{\Phi(z)}),
%:=\lim_{n\to\infty}\lambda_{n}(\Phi_{*}\mu,1/\bar{\Phi(z)}),
\qquad z\in\Omega.
\end{equation}

\end{corollary}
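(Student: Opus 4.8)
The plan is to identify the right-hand side $\lambda_{\infty}(\Phi_{*}\mu,1/\bar{\Phi(z)})$ with the infimum appearing in Lemma~\ref{44}, namely $\inf\{\int|f|^{2}d\mu : f\in A_{e}(\Gamma),\ f(z)=1\}$, and then invoke Lemma~\ref{44} to conclude. First I would write $w=1/\bar{\Phi(z)}\in\D$ and recall that by definition $\lambda_{\infty}(\Phi_{*}\mu,w)=\lim_{n}\lambda_{n}(\Phi_{*}\mu,w)$, where $\lambda_{n}(\Phi_{*}\mu,w)=\inf\{\int_{\T}|q|^{2}\,d(\Phi_{*}\mu) : q\in\PP_{n},\ q(w)=1\}$. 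Changing variables via $\Phi$, $\int_{\T}|q|^{2}\,d(\Phi_{*}\mu)=\int_{\Gamma}|q\circ\Phi|^{2}\,d\mu$, so each competitor is a function $q\circ\Phi$, analytic on $\Omega_{\infty}$ and continuous up to $\Gamma$ (since $\Phi\in A_{e}(\Gamma)$ is bounded there), i.e.\ an element of $A_{e}(\Gamma)$. The normalization $q(w)=1$ must be matched to $f(z)=1$: here I would use that $\Phi(z)$ and $w=1/\bar{\Phi(z)}$ are related by the map $u\mapsto 1/\bar u$, which is an anti-holomorphic involution of the Riemann sphere exchanging $\{|u|>1\}$ and $\D$; the cleanest route is to note $q(w)=1$ with $q\in\PP_n$ is the same constraint, after passing to the ``reflected'' polynomial $q^{*}(u):=\bar u^{n}\,\overline{q(1/\bar u)}$, as a normalization of a polynomial at $\Phi(z)$, exactly mirroring the identity $|z|^{2n}\lambda_n(\mu,z)=\lambda_n(\mu,1/\bar z)$ from (\ref{christinv}) already in play on the disk.

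More directly: by (\ref{christinv}) applied on $\T$ to the measure $\Phi_{*}\mu$ at the point $\Phi(z)$ (which has modulus $>1$), we have $|\Phi(z)|^{2n}\lambda_{n}(\Phi_{*}\mu,\Phi(z))=\lambda_{n}(\Phi_{*}\mu,1/\bar{\Phi(z)})$. Now $\lambda_{n}(\Phi_{*}\mu,\Phi(z))=\inf\{\int_{\T}|q|^{2}d(\Phi_{*}\mu):q\in\PP_n,\ q(\Phi(z))=1\}=\inf\{\int_{\Gamma}|q\circ\Phi|^{2}d\mu:q\in\PP_n,\ (q\circ\Phi)(z)=1\}$, so multiplying by $|\Phi(z)|^{2n}$ and rescaling the competitor gives $|\Phi(z)|^{2n}\lambda_{n}(\Phi_{*}\mu,\Phi(z))=\inf\{\int_{\Gamma}|q\circ\Phi|^{2}d\mu : q\in\PP_n,\ (q\circ\Phi)(z)=|\Phi(z)|^{n}\}$. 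Letting $n\to\infty$, the functions $q\circ\Phi$ range, in the limit, over $A_{e}(\Gamma)$: every $q\circ\Phi$ lies in $A_{e}(\Gamma)$, and conversely any $f\in A_{e}(\Gamma)$ is a locally uniform limit on $\Omega_\infty$ (hence, since $\Gamma$ is analytic, uniform on $\Gamma_r$ for $r$ slightly less than $1$) of functions $q_n\circ\Phi$ with $\deg q_n=n$ — this is exactly the Padé-type approximation content of Proposition~\ref{Amb}(3) transplanted to $\Omega$ via $\Phi$, or simply the observation that on $|u|>1$ a function analytic at $\infty$ is uniformly approximable by polynomials in $u$ on compact subsets. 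Tracking the normalization constant $|\Phi(z)|^n$ and dividing it out (it only rescales competitors by a factor tending to $1$ after one fixes $f(z)=1$), one obtains $\lim_n |\Phi(z)|^{2n}\lambda_n(\Phi_*\mu,\Phi(z))=\inf\{\int_\Gamma|f|^2 d\mu : f\in A_e(\Gamma),\ f(z)=1\}$, and combining with (\ref{christinv}) and Lemma~\ref{44} yields (\ref{link-G-D}).

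The main obstacle I anticipate is making the ``functions $q_n\circ\Phi$ exhaust $A_e(\Gamma)$ in the limit'' step fully rigorous while keeping the normalization at $z$ under control — in particular showing that restricting to polynomial competitors in the disk does not lose anything against arbitrary $f\in A_e(\Gamma)$, and that the rescaling factors $c_n$ needed to enforce $q_n(\Phi(z))=1$ (equivalently $(q_n\circ\Phi)(z)=|\Phi(z)|^n$ in the rescaled picture) satisfy $|c_n|\to1$. But this is precisely the mechanism already executed inside the proof of Lemma~\ref{44} using Faber polynomials and Proposition~\ref{Amb}; so in practice the corollary should follow by combining Lemma~\ref{44} with the change of variables $u=\Phi(z)$ and the disk identity (\ref{christinv}), with little additional work. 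I would also double-check the harmless point that $\lambda_\infty(\Phi_*\mu,\cdot)=0$ exactly when $\Phi_*\mu$ fails the Szegő condition, which is consistent with the infimum in Lemma~\ref{44} being $0$ in that degenerate case (e.g.\ when $\mu$ has finite support).
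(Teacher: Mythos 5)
Your proposal is correct and follows essentially the same route as the paper: the paper's proof is precisely the chain consisting of Lemma \ref{44} applied to $\Gamma$, the exact change of variables identifying $\inf\{\int|f|^{2}d\mu : f\in A_{e}(\Gamma),\, f(z)=1\}$ with $\inf\{\int|f|^{2}d\Phi_{*}\mu : f\in A_{e}(\T),\, f(\Phi(z))=1\}$, Lemma \ref{44} applied again on $\T$ to recover $\lim_{n}|\Phi(z)|^{2n}\lambda_{n}(\Phi_{*}\mu,\Phi(z))$, and finally (\ref{christinv}). The density/normalization issue you flag as the main obstacle is indeed absorbed exactly as you suspect, by invoking Lemma \ref{44} on the circle side rather than re-running the Faber-polynomial approximation.
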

\begin{proof}
One has, in view of (\ref{sim}) and (\ref{christinv}),
\begin{align*}
%\liminf_{n\to\infty} 
\tilde\lambda_{\infty} & (\mu,z)
 =\inf\left\{\int_{\Gamma}|f|^{2}d\mu,~f\in A_{e}(\Gamma),~f(z)=1\right\}
\\[10pt]
& =\inf\left\{\int_{\Gamma}|f|^{2}d\Phi_{*}\mu,~f\in A_{e}(\T),~f(\Phi(z))=1\right\}
=\lim_{n\to\infty}|\Phi(z)|^{2n}\lambda_{n}(\Phi_{*}\mu,\Phi(z))
\\[10pt]
& =\lim_{n\to\infty}\lambda_{n}(\Phi_{*}\mu,1/\bar{\Phi(z)})
=\lambda_{\infty}(\Phi_{*}\mu,1/\bar{\Phi(z)}).
\end{align*}
\end{proof}
%%%%%%%%%%%%%%%%%%%
\section{Case of $K$ bounded by a curve $\Gamma\in C(2,\alpha)$}\label{C2}
With the same notation as section 4 we now assume that $\Gamma=\p\Omega$ is a $C(2,\alpha)$ curve. 
We start with proving a weak monotonicity of the sequence $\{\tilde{B}_{n}(\mu,z)\}_n$ for $\mu$ on $\Gamma$.

\begin{lemma}\label{lem-B-ana}
Let $z\in\Omega$ be fixed. 
%There exists a positive integer $p_{0}=p_{0}(\Gamma,z)$, depending only on $\Gamma$ and $z$, such that the sequence $\tilde{B}_{np_{0}}(\mu,z)$ increases with $n$. More precisely,
Let $\mu$ be any measure supported on $\Gamma$ such that the orthogonal polynomials are well-defined up to degree $N$. Let $n<N$. Then there exist positive numbers $c_{n}\geq1$ such that
\begin{equation}\label{monot}
\tilde{B}_{N-n}(\mu,z)\leq c_{n}\tilde{B}_{N}(\mu,z),\qquad c_{n}=1+\OO\left(\frac{\log n}{n^{1+\alpha}}\right)
%\left(1+\OO\left(\frac{1}{n^{2}}\right)\right)
,\qquad\text{as }n,N\to\infty,
\end{equation}
where the $c_{n}$'s are independent of the measure $\mu$.
\end{lemma}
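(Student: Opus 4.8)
The plan is to compare the degree-$N$ Bergman function with the degree-$(N-n)$ one by building, from the orthonormal system $\{q_k^\mu\}_{k=0}^{N}$ for $\mu$, a test system that is essentially the push-down of degree-$(N-n)$ polynomials multiplied by a fixed factor of size $|\Phi|^{2n}$. Concretely, recall the dual extremal characterization
$$\tilde B_{N}(\mu,z)=\frac{1}{\tilde\lambda_N(\mu,z)}=\sup\left\{\frac{|p(z)|^2}{|\Phi(z)|^{2N}\int |p|^2\,d\mu}:p\in\PP_N\right\},$$
and similarly for $N-n$ with $N$ replaced by $N-n$. So it suffices to show: for every $q\in\PP_{N-n}$ there is $p\in\PP_N$ with $|p(z)|^2/|\Phi(z)|^{2N}\ge c_n^{-1}|q(z)|^2/|\Phi(z)|^{2(N-n)}$ and $\int|p|^2\,d\mu\le\int|q|^2\,d\mu$ — or rather the reverse inequalities rearranged so that the extra factor $|\Phi(z)|^{2n}$ in the numerator is produced up to the error $c_n=1+\OO(r^n)$.

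First I would fix $r>1$ (close to $1$, inside the analyticity neighborhood $\Gamma_{1-\eps}$) and take the Faber polynomial $F_n$ of $U$. For $n$ large its zeros lie inside $\Gamma_{r/2}$, and by the Faber estimate \eqref{Faber-1} we have $F_n(w)=\Phi^n(w)(1+\OO(r^n/((R-r)R^n)))$ uniformly on any $\Gamma_R$ with $R>r$; in particular $F_n(z)=\Phi^n(z)(1+\OO(r^n))$ and, on $\Gamma$ itself, $|F_n|$ is bounded above and below by $|\Phi|^n=1$ up to a factor $1+\OO(r^n)$ — here one uses that $|\Phi|\equiv 1$ on $\Gamma$, so $|F_n|$ is within $1+\OO(r^n)$ of $1$ on $\Gamma$ (shrinking $r$ slightly if needed so the error is genuinely geometric; strictly one needs $F_n/\Phi^n\to 1$ uniformly on a neighborhood of $\Gamma$, which the integral formula for $F_n$ gives). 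Given $q\in\PP_{N-n}$ of degree at most $N-n$, set $p:=q\cdot F_n\in\PP_N$. Then on $\Gamma$,
$$\int_\Gamma |p|^2\,d\mu=\int_\Gamma |q|^2\,|F_n|^2\,d\mu\le (1+\OO(r^n))\int_\Gamma |q|^2\,d\mu,$$
while at the point $z$,
$$|p(z)|^2=|q(z)|^2\,|F_n(z)|^2=|q(z)|^2\,|\Phi(z)|^{2n}\,(1+\OO(r^n)).$$
Dividing the two relations and using the extremal characterizations gives $\tilde B_{N-n}(\mu,z)\le c_n\,\tilde B_N(\mu,z)$ with $c_n=1+\OO(r^n)$, and the constant depends only on $\Gamma$, $z$, $r$ — not on $\mu$. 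Since $c_n\ge 1$ automatically (after absorbing signs into the $\OO$-constant, or simply by noting $\tilde B_{N-n}\le\tilde B_N$ would follow if $|F_n|\ge|\Phi|^n$ on $\Gamma$, which we don't have exactly, so we keep the harmless factor), we may take $c_n=1+\OO(r^n)\ge 1$.

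The main obstacle I anticipate is making the two-sided control of $|F_n|$ on $\Gamma$ genuinely uniform with a geometric error: the clean estimate \eqref{Faber-1} is stated on level curves $\Gamma_R$ with $R>r$, i.e.\ strictly outside $\Gamma_r$, whereas I need it on $\Gamma=\Gamma_1$. This is where I would use that $\Gamma$ is real analytic so $\Phi$ continues past $\Gamma$ into $\Omega_{1-\eps}$: apply the Faber integral representation with contour $\Gamma_{r'}$ for some $1-\eps<r'<1$, obtaining $F_n(w)/\Phi^n(w)=1+\OO((r'/|\Phi(w)|)^n)$ uniformly for $w$ on $\Gamma=\{|\Phi|=1\}$, which since $r'<1$ is $1+\OO((r')^n)$; relabeling $r'$ as (a new, smaller) $r$ matches the statement. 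A secondary point is that $\mu$ need not be absolutely continuous, but nothing above uses that — the bound $|F_n(w)|^2\le(1+\OO(r^n))$ holds pointwise for all $w\in\Gamma=\supp\mu$, so the integral estimate goes through for any finite measure on $\Gamma$. Finally, one should note $p=qF_n$ has degree exactly $\deg q+n\le N$, so it is a legitimate competitor in the degree-$N$ extremal problem, and $p(z)\ne 0$ when $q(z)\ne 0$ since $z\in\Omega$ lies outside $\Gamma_r\supset$ (zeros of $F_n$); the normalization to $p_n(z)=|\Phi(z)|^N$ as in \eqref{gapp} can be arranged by scaling, exactly as in the proof of Lemma \ref{44}.
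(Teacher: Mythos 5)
Your proposal is correct and follows essentially the same route as the paper: both compare the two extremal problems by multiplying degree-$(N-n)$ competitors by the Faber polynomial $F_n$, then control $\|F_n\|_{\Gamma}$ via the integral representation over a contour $\Gamma_{r}$ with $1-\eps<r<1$ (using analyticity of $\Gamma$) and $|F_n(z)|$ via the same estimate at the exterior point $z$. The only cosmetic differences are that the paper phrases the inequality in terms of the Christoffel functions $\lambda_N$ rather than $\tilde B_N$ and chooses $r=\sqrt{|\Phi(z)|}$ for the estimate at $z$, neither of which changes the argument.
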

\begin{proof}
Let $n<N$. We will prove that
$$
|\Phi(z)|^{2n}\lambda_{N}(\mu,z)\leq c_{n}\lambda_{N-n}(\mu,z)
$$
for appropriate $c_n$, which is equivalent to the inequality in (\ref{monot}).

To estimate $|\Phi(z)|^{2n}\lambda_{N}(\mu,z)$, 
$$|\Phi(z)|^{2n}\lambda_{N}(\mu,z) =|\Phi(z)|^{2n}\min_{p\in{\mathcal P}_{N},~p(z)\neq0}\frac{\int_{\Gamma}|p|^{2}d\mu}{|p(z)|^{2}}$$

$$ \leq|\Phi(z)|^{2n}\min_{p\in{\mathcal P}_{N-n},~p(z)\neq0}\frac{\int_{\Gamma}|F_{n}p|^{2}d\mu}{|F_{n}(z)p(z)|^{2}}$$
$$=\frac{|\Phi(z)|^{2n}}{|F_{n}(z)|^2} \cdot \min_{p\in{\mathcal P}_{N-n},~p(z)\neq0}\frac{\int_{\Gamma}|F_{n}p|^{2}d\mu}{|p(z)|^{2}}.$$

To estimate $\int_{\Gamma}|F_{n}p|^{2}d\mu$, we get
$$\int_{\Gamma}|F_{n}p|^{2}d\mu  \leq \|F_{n}\|^{2}_{\Gamma}\int_{\Gamma}|p|^{2}d\mu$$
so that

$$|\Phi(z)|^{2n}\lambda_{N}(\mu,z) \leq \frac{|\Phi(z)|^{2n}}{|F_{n}(z)|^2} \|F_{n}\|^{2}_{\Gamma} \cdot  \min_{p\in{\mathcal P}_{N-n},~p(z)\neq0}\frac{\int_{\Gamma}|p|^{2}d\mu}{|p(z)|^{2}}.$$
The last minimum 
%term $ \min_{p\in{\mathcal P}_{N-n},~p(z)\neq0}\frac{\int_{\Gamma}|p|^{2}d\mu}{|p(z)|^{2}}$ 
equals $\lambda_{N-n}(\mu,z)$ and we need estimate 
$$\frac{|\Phi(z)|^{2n}}{|F_{n}(z)|^2} \cdot \|F_{n}\|^{2}_{\Gamma}$$
from above. Using the estimate (\ref{Faber-1}) with $p=1$ for each piece, we obtain
$$
|\Phi(z)|^{2n}\lambda_{N}(\mu,z)\leq \left(1+\OO\left(\frac{\log n}{n^{1+\alpha}}\right)\right)\cdot \lambda_{N-n}(\mu,z),
$$
from which the existence of the $c_{n}$'s follows. The proof shows that they are independent of the measure $\mu$.
\end{proof}
\begin{remark}
In the particular case of $\Gamma=\T$, the unit circle, and $\mu=d\theta/2\pi$, the family $\{z^{n}\}_{n\in\N}$ is an orthonormal basis, and 
$$
B_{n}(\mu,z)=\frac{|z|^{2n+2}-1}{|z|^{2}-1}.
$$
The inequality $\tilde B_{n-1}(\mu,z)\leq\tilde B_{n}(\mu,z)$ is true since it is equivalent to $|z|^{2n+2}-1\leq|z|^{2n+2}$. For the harmonic measure $\mu_P$ in (\ref{Pois0}), from (\ref{bnmup}) we have $B_{n}(\mu_{P},z_{0})=|z_{0}|^{2n}$ so that $ \tilde B_{n}(\mu_{P},z_{0})=1$ for all $n$.
\end{remark}

In Proposition \ref{prop46} and Lemma \ref{lem-3-sub}, the point $z$ is fixed and for any measure $\mu$, we will simply write $\tilde B_{n}(\mu), \tilde B_{\infty}(\mu)$ instead of $\tilde B_{n}(\mu,z), \tilde B_{\infty}(\mu,z)$, and similarly for other expressions depending on $z$.
\begin{proposition} \label{prop46}
Fix $z\in \Omega$.
Assume that a subsequence $\{\nu_{\vphi(n)}\}_n$ of a sequence $\{\nu_{n}\}_n$ of OPM's tends weak-* to a limit measure $\alpha$. 
Then $\alpha$ satisfies the following:\\
1) For all integers $k$, we have
\begin{equation}\label{ineq-B-L}
\tilde B_{k}(\alpha)\leq1\leq %|\Phi(z_{0})|^{2k}
\tilde\lambda_{k}(\alpha).
\end{equation}
2) $\alpha$ has an infinite number of points in its support.
%3) $\Phi_{*}\alpha$ belongs to the Szeg\H{o} class. (NEEDS THAT $\Gamma$ IS ANALYTIC)\\
%4) $\alpha$ is absolutely continuous with respect to $|d\zeta|$.
\end{proposition}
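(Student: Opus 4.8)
The plan is to establish the Christoffel-function inequality $\tilde\lambda_k(\alpha)\ge 1$ for every integer $k\ge 0$ first, since $\lambda_k$ is always defined, and then to deduce $\tilde B_k(\alpha)\le 1$ and 2) from it. The one genuinely new step is the passage from a bound at the varying degree $\vphi(n)$ down to a bound at a fixed degree $k$, for which the quantitative Faber-polynomial monotonicity of Lemma \ref{lem-B-ana} is exactly what is needed.

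First I would fix $k$ and recall that every OPM $\nu_n$ of order $n$ satisfies $\tilde B_n(\nu_n)\le 1$: by (\ref{max-growth}), $B_n(\nu_n,z)\le e^{2ng_\Omega(z)}=|\Phi(z)|^{2n}$, since $g_\Omega(z)=\log|\Phi(z)|$ by conformal invariance. For $n$ large enough that $\vphi(n)>k$, I would apply Lemma \ref{lem-B-ana} to $\mu=\nu_{\vphi(n)}$ --- whose orthogonal polynomials exist up to degree $\vphi(n)$ --- with $N=\vphi(n)$ and with $\vphi(n)-k$ in the role of the index ``$n$'' there, so that $N-n=k$. This gives
$$
\tilde B_k(\nu_{\vphi(n)})\le c_{\vphi(n)-k}\,\tilde B_{\vphi(n)}(\nu_{\vphi(n)})\le c_{\vphi(n)-k},\qquad c_{\vphi(n)-k}=1+\OO\big(r^{\vphi(n)-k}\big)\to 1\quad(r<1),
$$
the asymptotic regime $N\to\infty$, $N-k\to\infty$ of Lemma \ref{lem-B-ana} being in force because $\vphi(n)\to\infty$. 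Hence $\limsup_n\tilde B_k(\nu_{\vphi(n)})\le 1$, i.e.\ $\liminf_n\tilde\lambda_k(\nu_{\vphi(n)})\ge 1$. (For $k=0$ this is trivial, since $\tilde B_0(\mu)=|q_0^\mu(z)|^2=1$ for every $\mu\in\MM(\Gamma)$.)

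To pass to the limit measure $\alpha$, I would use that for each fixed $p\in\PP_k$ with $p(z)=1$ the functional $\mu\mapsto\int_\Gamma|p|^2\,d\mu$ is weak-* continuous (as $|p|^2\in C(\Gamma)$), so that $\lambda_k(\cdot,z)=\inf\{\int_\Gamma|p|^2\,d\mu:p\in\PP_k,\ p(z)=1\}$, and hence $\tilde\lambda_k(\cdot,z)$, is weak-* upper semicontinuous. Therefore $\tilde\lambda_k(\alpha)\ge\limsup_n\tilde\lambda_k(\nu_{\vphi(n)})\ge\liminf_n\tilde\lambda_k(\nu_{\vphi(n)})\ge 1$, which is the left-hand inequality of (\ref{ineq-B-L}).

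Finally, $\tilde\lambda_k(\alpha)\ge 1>0$ for every $k$ means $\lambda_k(\alpha,z)>0$ for every $k$. If $\supp\alpha$ were finite, say $\supp\alpha=\{w_1,\dots,w_m\}$, then $p(w):=\prod_{i=1}^m(w-w_i)\big/\prod_{i=1}^m(z-w_i)$ --- well-defined since $z\in\Omega$ and hence $z\notin\supp\alpha$ --- lies in $\PP_m$, satisfies $p(z)=1$ and $\int_\Gamma|p|^2\,d\alpha=0$, forcing $\lambda_m(\alpha,z)=0$, a contradiction; so $\supp\alpha$ is infinite, which is 2). In particular the orthogonal polynomials of $\alpha$ of every degree exist, so $\tilde B_k(\alpha)=\tilde\lambda_k(\alpha)^{-1}\le 1$, completing 1). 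I do not expect any essential difficulty beyond this; the only delicate point is the application of Lemma \ref{lem-B-ana}, which works precisely because its correction factors $c_{\vphi(n)-k}$ are uniform in the measure and converge to $1$.
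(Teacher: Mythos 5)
Your proposal is correct and follows essentially the same route as the paper: the bound $\tilde B_{\vphi(n)}(\nu_{\vphi(n)})\leq 1$ from (\ref{max-growth}), the descent from degree $\vphi(n)$ to fixed degree $k$ via the uniform constants $c_{\vphi(n)-k}\to 1$ of Lemma \ref{lem-B-ana}, the weak-* semicontinuity of the Christoffel function to pass to $\alpha$, and the finite-support contradiction for 2) are exactly the paper's ingredients, merely arranged with $\tilde\lambda_k$ in place of $\tilde B_k$ and the semicontinuity step placed at the end of the chain rather than the beginning.
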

\begin{proof}
%First, we show that $\alpha$ has an infinite number of points in its support.
To show 1), for a given $k$, we have
\begin{align}
\tilde B_{k}(\alpha) 
\leq
\liminf_{n}\tilde B_{k}(\nu_{\vphi(n)})
\leq\liminf_{n}c_{\vphi(n)-k}\tilde B_{\vphi(n)}(\nu_{\vphi(n)}) 
=\liminf_{n}\tilde B_{\vphi(n)}(\nu_{\vphi(n)}) \leq1. \notag
\end{align}
Here the first inequality 
%$\tilde B_{k}(\alpha)\leq \liminf_{n} \tilde B_{k}(\nu_{\vphi(n)})$ in the second line 
follows from lower semicontinuity of $B_{k}$ (and hence $\tilde  B_{k}$), recall Lemma \ref{low-sc}. 
%over general probability measures on $K$: this follows since, by definition (\ref{max-min}), $\lambda_k$ is uppersemicontinuous. In the last line, 
The second inequality and the equality use Lemma \ref{lem-B-ana}, while the final inequality uses (\ref{max-growth}) and the fact that $|\Phi(z)|=e^{g_{\Omega}(z)}$, $z\in\Omega$. 
The second inequality in (\ref{ineq-B-L}) is equivalent to the first one.
\\[5pt]
We prove 2) by contradiction. Assume that $\alpha$ has, say, $k$ points in its support. Then, $B_{k}(\alpha)=\infty$, hence $\tilde B_{k}(\alpha)=\infty$, which contradicts the first inequality in (\ref{ineq-B-L}), and proves 2). Note, in particular, that orthogonal polynomials of all degrees are well-defined for the measure~$\alpha$.
\end{proof}
From (\ref{ineq-B-L}), all numbers $\tilde B_{n}(\alpha)$, $n\geq0$, are less than or equal to 1, and thus
$$\tilde B_{\infty}(\alpha)=\lim_{n}\tilde B_{n}(\alpha)\leq1$$
(recall from Lemma \ref{44} the limit exists).
\begin{lemma}\label{lem-3-sub}
Let $\{\nu_{n}\}_{n}$ be a sequence of OPM's on $K$, with $\nu_n$ of order $n$.
%, such that, for each $n$, the orthogonal polynomials for $\nu_{n}$ are well-defined up to degree $n$.
For any subsequence $\{\nu_{\vphi_{1}(n)}\}_n$ of $\{\nu_{n}\}_{n}$ with a weak-$^{\star}$ limit $\alpha$, there is a subsequence $\{\nu_{\vphi_{2}(n)}\}_n$ of $\{\nu_{\vphi_{1}(n)}\}_n$ such that
	\begin{equation}\label{conv-unif}
	\lim_{n} \tilde B_{\vphi_{2}(n)}(\nu_{\vphi_{2}(n)})= \tilde B_{\infty}(\alpha).
\end{equation}
\end{lemma}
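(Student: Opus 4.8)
The plan is to work with $\tilde\lambda_{n}=\tilde B_{n}^{-1}$ and to produce a \emph{sparse} subsequence along which the ``diagonal'' Christoffel numbers $\tilde\lambda_{N_{k}}(\nu_{N_{k}},z)$ converge to $\tilde\lambda_{\infty}(\alpha,z)$; inverting then gives (\ref{conv-unif}). Write $z=z_{0}$. Two facts are used repeatedly: by Lemma \ref{44}, $\tilde\lambda_{n}(\mu,z)\to\tilde\lambda_{\infty}(\mu,z)$ for every $\mu\in\MM(\Gamma)$; and, rewriting Lemma \ref{lem-B-ana}, there are constants $c_{n}\ge 1$ independent of $\mu$ with $c_{n}-1=\OO(r^{n})$ ($0<r<1$ fixed) such that $\tilde\lambda_{N}(\mu,z)\le c_{N-m}\,\tilde\lambda_{m}(\mu,z)$ whenever $m<N$ (if the degree-$N$ orthogonal polynomials of $\mu$ do not exist the left side is $0$, so this still holds).

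First I would choose, since $\vphi_{1}(n)\to\infty$, integers $m_{1}<m_{2}<\cdots$ so that, with $N_{k}:=\vphi_{1}(m_{k})$, the gaps $N_{k+1}-N_{k}$ increase fast enough that $\sum_{k}(c_{N_{k+1}-N_{k}}-1)<\infty$ (possible as $c_{n}-1=\OO(r^{n})$), and put $D_{k}:=\prod_{j=1}^{k-1}c_{N_{j+1}-N_{j}}$, so $D_{k}\uparrow D_{\infty}\in[1,\infty)$. Define $g_{k}(\mu):=\tilde\lambda_{N_{k}}(\mu,z)/D_{k}$ on $\MM(\Gamma)$. Because $D_{k+1}=c_{N_{k+1}-N_{k}}D_{k}$, the monotonicity inequality gives $g_{k+1}(\mu)\le g_{k}(\mu)$ for all $\mu$, so $\{g_{k}\}$ is pointwise nonincreasing; each $g_{k}$ is weak-$^{\star}$ upper semicontinuous on the compact space $\MM(\Gamma)$, being a positive multiple of $\inf\{\int_{\Gamma}|p|^{2}\,d\mu:\ p\in\PP_{N_{k}},\ p(z)=1\}$, an infimum of weak-$^{\star}$ continuous functions of $\mu$; and by Lemma \ref{44}, $g_{k}(\mu)\downarrow g(\mu):=\tilde\lambda_{\infty}(\mu,z)/D_{\infty}$ pointwise.

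Then I would run a Dini-type squeeze along the diagonal. Note $\{\nu_{N_{k}}\}$ is a subsequence of $\{\nu_{\vphi_{1}(n)}\}$, hence $\nu_{N_{k}}\to\alpha$ weak-$^{\star}$, and $\alpha\in\MM(K)$. Lower bound: $\nu_{N_{k}}$ is an OPM of order $N_{k}$, so it maximizes $\lambda_{N_{k}}(\cdot,z)$ over $\MM(K)\ni\alpha$, whence $g_{k}(\nu_{N_{k}})\ge g_{k}(\alpha)\ge g(\alpha)$ (the last step since $g_{k}(\alpha)\downarrow g(\alpha)$), so $\liminf_{k}g_{k}(\nu_{N_{k}})\ge g(\alpha)$. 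Upper bound: fix $l$; monotonicity in $k$ gives $g_{k}(\nu_{N_{k}})\le g_{l}(\nu_{N_{k}})$ for $k\ge l$, and upper semicontinuity of $g_{l}$ gives $\limsup_{k}g_{l}(\nu_{N_{k}})\le g_{l}(\alpha)$; hence $\limsup_{k}g_{k}(\nu_{N_{k}})\le g_{l}(\alpha)$ for every $l$, and letting $l\to\infty$, $\limsup_{k}g_{k}(\nu_{N_{k}})\le g(\alpha)$. Thus $g_{k}(\nu_{N_{k}})\to g(\alpha)$, i.e.\ (since $D_{k}\to D_{\infty}\in(0,\infty)$) $\tilde\lambda_{N_{k}}(\nu_{N_{k}},z)\to\tilde\lambda_{\infty}(\alpha,z)$. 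Finally, from (\ref{max-growth}) and $|\Phi(z)|=e^{g_{\Omega}(z)}$ we get $B_{N_{k}}(\nu_{N_{k}},z)\le|\Phi(z)|^{2N_{k}}$, i.e.\ $\tilde\lambda_{N_{k}}(\nu_{N_{k}},z)\ge 1$; therefore $\tilde\lambda_{\infty}(\alpha,z)\ge 1>0$, and inverting gives $\tilde B_{N_{k}}(\nu_{N_{k}},z)\to 1/\tilde\lambda_{\infty}(\alpha,z)=\tilde B_{\infty}(\alpha,z)$. Setting $\vphi_{2}(k):=N_{k}=\vphi_{1}(m_{k})$ yields (\ref{conv-unif}).

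The main obstacle—and the reason for passing to a sparse subsequence rather than invoking Dini's theorem verbatim—is that $\tilde\lambda_{\infty}(\cdot,z)$ (equivalently $\tilde B_{\infty}(\cdot,z)$) is only upper (resp.\ lower) semicontinuous, not continuous, for the weak-$^{\star}$ topology: finitely supported measures tending to a measure in the Szeg\H{o} class already have $\tilde\lambda_{\infty}=0$. So there is no uniform convergence $\tilde\lambda_{n}\to\tilde\lambda_{\infty}$ near $\alpha$ to feed into Dini. Sparsifying is precisely what makes the correction factors $c_{N_{k+1}-N_{k}}$ summable, so that $\{g_{k}\}$ becomes genuinely monotone with a finite, strictly positive limiting normalization $D_{\infty}$, after which the semicontinuity-plus-monotonicity squeeze substitutes for the unavailable uniform convergence. (The lower bound above is in essence Proposition \ref{prop46}, now applied along $\{N_{k}\}$; the new content is the upper bound.)
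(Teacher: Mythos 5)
Your proof is correct and follows essentially the same route as the paper's: both pass to a sparse subsequence so that the product of the correction constants $c_{n}$ from Lemma \ref{lem-B-ana} converges, and then combine that weak monotonicity with the optimality of the diagonal measures and a Dini-type argument to force $\tilde B_{N_{k}}(\nu_{N_{k}})\to\tilde B_{\infty}(\alpha)$. The only (cosmetic, and arguably cleaner) differences are that you phrase everything in terms of $\tilde\lambda_{n}=\tilde B_{n}^{-1}$, replace the paper's invocation of Dini's theorem on the compactified sequence $\{\nu_{\varphi_{2}(n)}\}\cup\{\alpha\}$ by the one-sided $\liminf$/$\limsup$ squeeze that proves it, and use upper semicontinuity of $\lambda_{n}(\cdot,z)$ (an infimum of weak-$^{\star}$ continuous functionals) where the paper uses continuity of $\tilde B_{n}(\cdot,z)$ as a function of the moments.
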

\begin{proof}
Note that, by Proposition \ref{prop46}, the weak-* convergence $\nu_{\vphi_{1}(n)} \to \alpha$ implies that orthogonal polynomials for the limit measure $\alpha$ exist for any degree $n\geq0$ and 
$$
\tilde B_{\infty}(\alpha)=\lim_{n}\tilde B_{\vphi_{1}(n)}(\alpha).
$$
If the sequence $\{\nu_{\vphi_{1}(n)}\}$ contains an element which appears infinitely many times, then $\alpha$ is equal to this element; hence we may assume that each element in the sequence $\nu_{\vphi_{1}(n)}$ appears at most a finite number of times. For a technical reason in the sequel of the proof (in the definition of the functions $F_{n}$ below), we replace the sequence 
$\nu_{\vphi_{1}(n)}$ with the subsequence, still denoted $\nu_{\vphi_{1}(n)}$, where we keep only the last occurence of each repeated element. Hence, with that change, each element in the sequence $\{\nu_{\vphi_{1}(n)}\}$ appears exactly once.

We choose the subsequence $\{\nu_{\vphi_{2}(n)}\}_n$ of $\{\nu_{\vphi_{1}(n)}\}_n$ in such a way that
\begin{equation}\label{cond}
\forall n\geq1,\quad n\leq 
\vphi_{2}(n)-\vphi_{2}(n-1)<\vphi_{2}(n+1)-\vphi_{2}(n).
\end{equation}
For a measure $\mu$, we set
\begin{equation}\label{C-def}
\tilde C_{n}(\mu) 
:=\left(\prod_{k=0}^{n} c_{\vphi_{2}(k)-\vphi_{2}(k-1)}\right)\tilde B_{\vphi_{2}(n)}(\mu),\qquad n\in\N,
\end{equation}
where the $c_{k}$ are the constants in (\ref{monot}) (recall that they are independent of $\mu$). The sequence $\tilde C_{n}(\mu)$ is increasing with $n$. Indeed, 
%from (\ref{monot}) and the first inequality in (\ref{cond}),
$$
\tilde C_{n}(\mu) /\tilde C_{n-1}(\mu) =c_{\vphi_{2}(n)-\vphi_{2}(n-1)}
\tilde B_{\vphi_{2}(n)}(\mu) /\tilde B_{\vphi_{2}(n-1)}(\mu)\geq1,
$$
where the inequality comes from (\ref{monot}).

For the measure $\alpha$ we also define
$$
\tilde C_{\infty}(\alpha)  :=L\tilde B_{\infty}(\alpha),\qquad L:=\prod_{k=0}^{\infty}c_{\vphi_{2}(k)-\vphi_{2}(k-1)}\geq1,
$$
The infinite product in the definition of $L$ converges because of (\ref{cond}) and the asymptotic behavior of the $c_{k}$ as $k$ tends to infinity, see (\ref{monot}). Also, by the choice of the subsequence $\{\nu_{\vphi_{1}(n)}\}_n$, we have
\begin{equation}\label{C-limit}
\tilde C_{\infty}(\alpha)=\lim_{n\to\infty}\tilde C_{n}(\alpha).
\end{equation}
The set of measures $S=\{\nu_{\vphi_{2}(0)},\nu_{\vphi_{2}(1)},\ldots,\alpha\}$ is compact.
Consider the array of values taken by the functions $F_{0}, F_{1},\ldots,F_{n},\ldots,F_{\infty}$ on $S$:
\\[3pt]
\begin{center}
\begin{tabular}{ c|cccccc } 
$F_{\infty}$ & $\tilde C_{\infty}(\alpha)$ & $\tilde C_{\infty}(\alpha)$ & $\tilde C_{\infty}(\alpha)$ & $\tilde C_{\infty}(\alpha)$ & $\tilde C_{\infty}(\alpha)$ &  $\tilde C_{\infty}(\alpha)$ 
\\
$\uparrow$ & $\uparrow$ & $\uparrow$ & $\uparrow$ & $\uparrow$ & $\uparrow$ & $\uparrow$ \\
$F_{n}$ & $\tilde C_{\infty}(\alpha)$ & $\ldots$ & $\tilde C_{\infty}(\alpha)$ & $\tilde C_{n}(\nu_{\vphi_{2}(n)})$ & $\to$ & $\tilde C_{n}(\alpha)$ \\ 
$\vdots$ & $\vdots$ & $\vdots$ & $\vdots$ & $\vdots$ & $\vdots$ & $\vdots$  \\
$F_{1}$ & $\tilde C_{\infty}(\alpha)$ & $\tilde C_{1}(\nu_{\vphi_{2}(1)})$ & $\ldots$ & & $\to$ & $\tilde C_{1}(\alpha)$\\ 
$F_{0}$ & $\tilde C_{0}(\nu_{\vphi_{2}(0)})$ & $\tilde C_{0}(\nu_{\vphi_{2}(1)})$ & \ldots & & $\to$ & $\tilde C_{0}(\alpha)$\\[5pt]
\hline
%\\
& $\nu_{\vphi_{2}(0)}$ & $\nu_{\vphi_{2}(1)}$ & $\ldots$ & $\nu_{\vphi_{2}(n)}$ & $\to$ & $\alpha$
\end{tabular}
\end{center}
where all values above the ascending main diagonal $\tilde C_{0}(\nu_{\vphi_{2}(0)}),\tilde C_{1}(\nu_{\vphi_{2}(1)}),\ldots,\tilde C_{n}(\nu_{\vphi_{2}(n)}),\ldots$ are equal to $\tilde C_{\infty}(\alpha)$. Note our choice of the subsequence $\{\nu_{\tilde\vphi_{1}(n)}\}$ insures each $F_n$ is well-defined. 
The following properties are satisfied:\\
a) The function $F_{\infty}$ is constant, hence continuous on $S$. \\
b) For each $n$, $F_{n}$ is continuous at $\alpha$ because $\tilde C_{n}(\nu_{\vphi_{2}(k)})\to\tilde C_{n}(\alpha)$ as $n\leq k\to\infty$. To see this, using (\ref{C-def}) we 
have 
$$\tilde C_{n}(\nu_{\vphi_{2}(k)})
=\left(\prod_{p=0}^{n} c_{\vphi_{2}(p)-\vphi_{2}(p-1)}\right)\tilde B_{\vphi_{2}(n)}(\nu_{\vphi_{2}(k)})
$$
and $\tilde B_{\vphi_{2}(n)}(\nu_{\vphi_{2}(k)})\to \tilde B_{\vphi_{2}(n)}(\alpha)$ as $k\to \infty$ since $\nu_{\vphi_{2}(k)}\to \alpha$ weak-*.
\\
c) At each $\nu_{\vphi_{2}(n)}$, the sequence of functions $F_{0}, F_{1},\ldots,F_{n},\ldots$ increases to $\tilde C_{\infty}(\alpha)$. Indeed, by (\ref{monot}), we have
$$\forall k\leq n-1,~ \tilde C_{k}(\nu_{\vphi_{2}(n)})\leq \tilde C_{k+1}(\nu_{\vphi_{2}(n)}),
\qquad\text{and}\qquad
\tilde C_{n}(\nu_{\vphi_{2}(n)})\leq \tilde C_{n}(\alpha)\leq \tilde C_{\infty}(\alpha),
$$
where the next-to-last inequality uses that $\nu_{\vphi_{2}(n)}$ is an optimal prediction measure.
\\
d) At $\alpha$,  the sequence of functions $F_{0}, F_{1},\ldots,F_{n},\ldots$ also increases to $\tilde C_{\infty}(\alpha)$. This is a consequence of (\ref{monot}) and (\ref{C-limit}).
%(on the last column, $\alpha$ is in the Szeg\H{o} class and all $\tilde C_{n}$ are well-defined).

Hence, from Dini's theorem, we may conclude that the convergence is uniform which implies that 
$\tilde C_{n}(\nu_{\vphi_{2}(n)})\to\tilde C_{\infty}(\alpha)$ and thus also (\ref{conv-unif}).
\end{proof}
%\begin{theorem} \label{keyth}
%For $z_0\in \Omega$, any sequence of optimal prediction measures $\{\nu_{n}\}_n$ tends weak-* to $\hat\delta_{z_{0}}$, the balayage of $\delta_{z_{0}}$ onto $\Gamma$.
%\end{theorem}
\begin{proof}[Proof of Theorem \ref{keyth}]
Let $\{\nu_{\vphi_{1}(n)}\}_n$ be a subsequence of $\{\nu_{n}\}_n$ which converges weak-* to a probability measure $\alpha$. From Lemma \ref{lem-3-sub}, there exists a subsequence 
$\{\nu_{\vphi_{2}(n)}\}_n$ of $\{\nu_{\vphi_{1}(n)}\}_n$
%with a limit $\alpha$. Then, there is a subsequence $\nu_{\vphi_{3}(n)}$ of $\nu_{\vphi_{2}(n)}$ 
such that, as $n$ tends to infinity,
$$
\tilde B_{\vphi_{2}(n)}(\nu_{\vphi_{2}(n)},z_0)\to\tilde B_{\infty}(\alpha,z_0).
$$
By definition of the OPM's,
$$
\forall\mu\in\MM(\Gamma),~B_{\vphi_{2}(n)}(\nu_{\vphi_{2}(n)},z_0)\leq B_{\vphi_{2}(n)}(\mu,z_0); 
\quad \hbox{hence} \ \tilde B_{\vphi_{2}(n)}(\nu_{\vphi_{2}(n)},z_0)\leq \tilde B_{\vphi_{2}(n)}(\mu,z_0).
$$
Letting $n$ tend to infinity, we get $\tilde B_{\infty}(\alpha,z_0)\leq \tilde B_{\infty}(\mu,z_0)$
which shows that $\alpha$ minimizes $\tilde B_{\infty}(\mu,z_0)$ over $\mu\in\MM(\Gamma)$, or equivalently, maximizes 
$$
\tilde\lambda_{\infty}(\mu,z_{0})=
\liminf_{n\to\infty} %|\Phi(z_{0})|^{2n}  
\tilde\lambda_{n}(\mu,z_{0})
$$
over measures $\mu\in\MM(\Gamma)$. By Corollary \ref{transport}, this is equivalent to the fact that $\Phi_{*}\alpha$ maximizes $\lambda_{\infty}(\nu,1/\bar{\Phi(z)})$ over measures $\nu \in \MM(\T)$. Finally, Theorem \ref{optim-D} shows that 
$$\Phi_{*}\alpha=\hat\delta_{1/\bar{\Phi(z_{0})}}=\hat\delta_{\Phi(z_{0})},$$ 
where the balayage is onto $\T$.
By conformal invariance of the balayage, we obtain that $\alpha$ equals $\hat\delta_{z_{0}}$, the balayage of $\delta_{z_{0}}$ onto~$\Gamma$. 
\end{proof}

We end with a discussion of a related asymptotic problem. For a connected, simply connected, compact subset $K$ of $\C$ we recall from (\ref{max-growth}) that for $z_0\in \Omega$, 
$$B_{n}(\nu_{n},z_{0})=\sup_{p\in\PP_{n}}\frac{|p(z_{0})|^{2}}{\|p\|_{K}^{2}}\leq e^{2ng_{\Omega}(z_0)}=|\Phi(z_0)|^{2n}.$$
In fact, from the first equality together with (\ref{bw}) it follows that
$$\lim_{n\to \infty} \frac{B_{n}(\nu_{n},z_{0})^{1/2n}}{|\Phi(z_0)|}=\lim_{n\to \infty} \tilde B_{n}(\nu_{n},z_{0})^{1/2n}=1.$$
There is the deeper question as to whether the limit of the sequence $\{\tilde B_{n}(\nu_{n},z_{0})\}_n$ -- without the $1/2n$ power -- exists. Clearly 
$$0\leq \liminf_{n\to \infty} \tilde B_{n}(\nu_{n},z_{0}) \leq \limsup_{n\to \infty} \tilde B_{n}(\nu_{n},z_{0}) \leq 1.$$

\begin{enumerate}
\item For the case of the unit circle, since 
$$B_{n}(\nu_{n},z_{0})=\sup_{p\in\PP_{n}}\frac{|p(z_{0})|^{2}}{\|p\|_{K}^{2}}=|z_0|^{2n}=|\Phi(z_0)|^{2n},$$ 
recall (\ref{Pois0}) and (\ref{bnmup}), we have $\tilde B_{n}(\nu_{n},z_{0})=1$ for all $n$. 
\item As a corollary of Lemma \ref{lem-3-sub} and Theorem \ref{keyth}, it follows that {\it for a $C(2,\alpha)$ curve $\Gamma$, we have 
\begin{equation} \label{bnasymp} \lim_{n\to \infty} \tilde B_{n}(\nu_{n},z_{0})=1 \end{equation}
for all $z_0 \in \Omega$.} 
\item For the interval $[-1,1]$, the existence of this limit for $z_0\not \in [-1,1]$ was shown by Yuditskii \cite{Y} and Peherstorfer \cite{Pe}; their proofs are very technical. Writing $\psi(z):=z+\sqrt{z^2-1}$ for the conformal map from $\C \setminus [-1,1]$ onto $\C\setminus \D$, we have  $g_{\Omega}(z)=\log|\psi(z)|$. Two special cases are more easily computed. First, for $x\in \R \setminus [-1,1]$, the polynomial $p_n$ in (\ref{maxpoly}) is the Chebyshev polynomial 
 $$T_n(z)={1\over 2}\left((\psi(z))^n+ ({\psi(z)})^{-n}\right).$$
 Thus for such $x$, from (\ref{max-min2}), 
 $$ \lim_{n\to \infty} \tilde B_{n}(\nu_{n},x)=  \lim_{n\to \infty} \frac{1}{2}
 \frac{|(\psi(x))^n+ ({\psi(x)})^{-n}|}{|\psi(x)|^n}= \frac12.$$
                 Next, for $z=ia, \ a\in \R, \ |a|>1$, from \cite{BLOC} 
                 $$|p_n(ia)|=\sqrt{a^2+1}[|a|+\sqrt{a^2+1}]^{n-1}.$$
                 Since $|\psi(ia)|=|a+\sqrt{a^2+1}|$, we have, for $a>0$,  
                 $$\lim_{n\to \infty} \tilde B_{n}(\nu_{n},ia)=  \lim_{n\to \infty} \frac{\sqrt{a^2+1}[|a|+\sqrt{a^2+1}]^{n-1}}{|a+\sqrt{a^2+1}|^n}=
                 \frac{\sqrt{a^2+1}}{a+ \sqrt{a^2+1}}.$$
                 The results in \cite{Y} and \cite{Pe} seem to indicate that, as with these special cases, for any $z_0\not \in [-1,1]$, 
                 \begin{equation} \label{lessthan1} \lim_{n\to \infty} \tilde B_{n}(\nu_{n},z_0)<1. \end{equation}
                 \item For a circular arc  $A_{\alpha}:=\{z\in \C: |z|=1, \ |\hbox{arg}z|\leq \alpha\}, \ 0<\alpha <\pi$, Eichinger \cite{E} shows that $ \lim_{n\to \infty} \tilde B_{n}(\nu_{n},z_0)$ exists for any $z_0$ with $|z_0|\not = 1$ and he calculates this limit.
\end{enumerate}

Concerning 2., in particular, for the confocal ellipses
$$E_r:=\{ z\in \C: |z-1|+|z+1| = r+1/r\}$$
(\ref{bnasymp}) holds for all points $z_0$ outside $E_r$ for each $r>1$. As $r\to 1$, these ellipses converge to the interval $[-1,1]$, which, according to 3., fails to have this property. We know of no general results on existence of the limit of the sequence $\{\tilde B_{n}(\nu_{n},z_{0})\}_n$. 

\begin{remark} For the interval $[-1,1]$, or, more generally, for a real analytic arc $\gamma$, 
%the analogues of Lemma \ref{44} and Corollary \ref{transport} hold, using the appropriate generalization of the space $A_{e}(\Gamma)$. However, 
there is a problem with generalizing Lemma \ref{44}, Corollary \ref{transport}, and the ``weak monotonicity'' lemma, Lemma \ref{lem-B-ana}. Indeed, if such results were true for $[-1,1]$, then the proofs of Proposition \ref{prop46} and hence Lemma \ref{lem-3-sub} and Theorem \ref{keyth} would be valid as well. However, equation (\ref{bnasymp}) then gives  $$\lim_{n\to \infty} \tilde B_{n}(\nu_{n},z_{0})=1$$
which contradicts (\ref{lessthan1}). Thus other ideas or techniques are required to deal with arcs.
\end{remark}
%%%%%%%%%%%

\vspace{1cm}
{\obeylines
\texttt{
S. Charpentier, stephane.charpentier.1@univ-amu.fr
F. Wielonsky, franck.wielonsky@univ-amu.fr
Laboratoire I2M - UMR CNRS 7373
Universit\'e Aix-Marseille, CMI 39 Rue Joliot Curie
F-13453 Marseille Cedex 20, FRANCE 
\medskip
N. Levenberg, nlevenbe@indiana.edu
Indiana University, Bloomington, IN 47405 USA
\medskip}
}

\begin{thebibliography}{99}

\bibitem{A} N. I. Achiezer, Theory of approximation, F. Ungar, 1956.
 
%\bibitem{AW} A. Ambroladze, H. Wallin, Convergence of rational interpolants with preassigned poles. J. Approx. Theory 89 (1997), no. 2, 238-256.

%\bibitem{BBLW} T. Bloom, L. Bos, N. Levenberg, S. Waldron, On the convergence of optimal measures. Constr. Approx. 32 (2010), no. 1, 159-179.
%\bibitem{Bi} P. Billingsley, Probability and measure, third edition, Wiley, 1995.

\bibitem {BLOC} L. Bos, N. Levenberg, J. Ortega-Cerd\`a, Optimal polynomial prediction measures and extremal polynomial growth. Constr. Approx. 54 (2021), 431-453.

%\bibitem{Ga}  D. Gaier, Lectures on complex approximation, Birkhauser, Boston, 1987.

%\bibitem{G} J.L. Geronimus, On some extremal problems in the space $L_{p}(\sigma)$. Mat. Sbornik N.S. 31, (1952). 3-26 (in Russian). 

%\bibitem{JNT} W.B. Jones, O. Njastad, W.J. Thron, Moment theory, orthogonal polynomials, quadrature, and continued fractions associated with the unit circle. Bull. London Math. Soc. 21 (1989), no. 2, 113-152.

%\bibitem{K} V.A. Kaliaguine, On asymptotics of $L_{p}$ extremal polynomials on a complex curve ($0<p<\infty$). J. Approx. Theory 74 (1993), 226-236. 

\bibitem{CSZ} J. S. Christiansen, B. Simon, and M. Zinchenko, Asymptotics of Chebyshev Polynomials, V. Residual Polynomials, Ramanujan J. 61 (2023), 251-278.

\bibitem{E} B. Eichinger, Szeg\H{o}-Widom asymptotics of Chebyshev polynomials on circular arcs. J. Approx. Theory 217 (2017), 15-25. 

\bibitem{G} D. Gaier, On the decrease of Faber polynomials in domains with piecewise analytic boundary. Analysis 21 (2001), 219-229. 

\bibitem{M} G. Meinardus, Approximation of functions: Theory and numerical methods. Springer Tracts in Natural Philosophy, Vol. 13, Springer-Verlag, New York 1967.

\bibitem{Pe} F. Peherstorfer, Extremal problems of Chebyshev type. Proc. Amer. Math. Soc. 137 (2009), no. 7, 2351-2361.

%\bibitem{P} C. Pommerenke, Boundary behavior of conformal maps, Springer Verlag, Berlin New York, 1991.

\bibitem{Ra} T. Ransford, Potential theory in the complex
plane, Cambridge University Press, Cambridge, 1995.

\bibitem{ST} E.~B. Saff and V.~Totik,
\newblock Logarithmic Potentials with External Fields, volume 316 of
  Grundlehren der Mathematischen Wissenschaften.
Springer-Verlag, Berlin, 1997.

%\bibitem{Se} D. Serre, Matrices. Theory and applications. Second edition. Graduate Texts in Mathematics 216. Springer, New York, 2010.

\bibitem{S} B. Simon, Orthogonal polynomials on the unit circle. Part 1. Classical theory. Amer.\ Math.\ Soc.\ Colloquium Publications 54, Part 1. American Mathematical Society, Providence, RI, 2005.

%\bibitem{Su} P.K. Suetin, Fundamental properties of polynomials orthogonal on a contour. Russ. Math. Surv. 21, 35-83 (1966).

\bibitem{Su2} P.K. Suetin, Series of Faber polynomials. 
Analytical Methods and Special Functions, Gordon and Breach Science Publishers, Amsterdam, 1998.

\bibitem{VA} W. Van Assche, Orthogonal polynomials in the complex plane and on the real line. Special functions, q-series and related topics (Toronto, ON, 1995), 211-245, Fields Inst. Commun. 14, Amer. Math. Soc., Providence, RI, 1997.

\bibitem{W} J. L. Walsh, Interpolation and approximation by rational functions in the complex domain. Third edition American Mathematical Society Colloquium Publications, Vol. XX American Mathematical Society, Providence, R.I. 1960.

\bibitem{Y} P. Yuditskii, A complex extremal problem of Chebyshev type. J. Anal. Math. 77 (1999), 207-235.
\end{thebibliography}
\end{document}